\tikzstyle{arc}=[->,shorten <=3pt, shorten >=3pt,
\tikzstyle{edge}=[shorten <=2pt, shorten >=2pt,
\tikzstyle{vertex}=[circle, fill=white, draw,
\newtheorem{theorem}{Theorem}
\newtheorem{lemma}[theorem]{Lemma}
\newtheorem{corollary}[theorem]{Corollary}
\newtheorem{proposition}[theorem]{Proposition}
\newtheorem{observation}[theorem]{Observation}
\newtheorem{question}[theorem]{Question}
\newtheorem{problem}[theorem]{Problem}
\newcommand{\la}{\leftarrow}
\newcommand{\ra}{\rightarrow}
\journal{Elsevier}
\begin{document}

\begin{frontmatter}

%% Title, authors and addresses

%% use the tnoteref command within \title for footnotes;
%% use the tnotetext command for the associated footnote;
%% use the fnref command within \author or \address for footnotes;
%% use the fntext command for the associated footnote;
%% use the corref command within \author for corresponding author footnotes;
%% use the cortext command for the associated footnote;
%% use the ead command for the email address,
%% and the form \ead[url] for the home page:
%%
%% \title{Title\tnoteref{label1}}
%% \tnotetext[label1]{}
%% \author{Name\corref{cor1}\fnref{label2}}
%% \ead{email address}
%% \ead[url]{home page}
%% \fntext[label2]{}
%% \cortext[cor1]{}
%% \address{Address\fnref{label3}}
%% \fntext[label3]{}

\title{{Oriented expressions of graph properties.}\tnoteref{t1}}

\tnotetext[t1]{This research was supported by SEP-CONACYT grant
A1-S-8397 and DGAPA-PAPIIT grant IA104521.}
%% use optional labels to link authors explicitly to addresses:
%% \author[label1,label2]{<author name>}
%% \address[label1]{<address>}
%% \address[label2]{<address>}

\author[FC]{Santiago Guzm\'an-Pro}
\ead{sanguzpro@ciencias.unam.mx}

\author[FC]{C\'esar Hern\'andez-Cruz\corref{cor1}}
\ead{chc@ciencias.unam.mx}

\address[FC]{Facultad de Ciencias\\
Universidad Nacional Aut\'onoma de M\'exico\\
Av. Universidad 3000, Circuito Exterior S/N\\
C.P. 04510, Ciudad Universitaria, CDMX, M\'exico}

\cortext[cor1]{Corresponding author}

\begin{abstract}

Several graph properties are characterized as the class of graphs that
admit an orientation avoiding finitely many oriented structures.
For instance, if $F_k$ is the set of homomorphic images of the directed
path on $k+1$ vertices, then a graph is $k$-colourable if and only if it admits
an orientation with no induced oriented graph in $F_k$.
There is a fundamental question underlying this kind of characterizations:
given a graph property, $\mathcal{P}$, is there a finite set of oriented graphs,
$F$, such that a graph belongs to $\mathcal{P}$ if and only if it admits an
orientation with no induced oriented graph in $F$?
We address this question by exhibiting necessary conditions upon certain
graph classes to admit such a characterization. Consequently, we exhibit an
uncountable family of hereditary classes, for which no such
finite set exists. In particular, the class of graphs with no holes of prime
length belongs to this family.

\end{abstract}

\begin{keyword}
Forbidden subgraph characterization \sep generalized colouring
\sep forbidden orientations
%% keywords here, in the form: keyword \sep keyword

%% MSC codes here, in the form: \MSC code \sep code
%% or \MSC[2008] code \sep code (2000 is the default)
\MSC 05C15 \sep 05C60 \sep 05C75
\end{keyword}

\end{frontmatter}

%%
%% Start line numbering here if you want
%%
% \linenumbers

%% main text

\section{Introduction}
\label{sec:Introduction}

All graphs and digraphs considered in this work are
loopless and without parallel edges or parallel arcs.
For basic terminology and notation, we refer the
reader to \cite{bondy2008}. Moreover, for any concepts
related to graph and digraph homomorphisms, we refer
the reader to \cite{hell2004}

Given a pair of (oriented) graphs, $G$ and $H$, we will write
$H < G$ if $H$ is an induced (oriented) subgraph of $G$.  A
natural way to characterize hereditary properties is by finding
a minimal set of forbidden induced subgraphs. Such a set is often
called the \textit{set of minimal obstructions} of the associated
hereditary property. Most of the time these sets turn out to be
infinite, hard to describe, or very difficult to find. Nonetheless,
characterizing such properties through alternative forbidden
structures has usually led to a finite set of forbidden structures.
For instance, consider the class of chordal graphs, and denote by
$B_1$ the oriented graph $(\{1,2,3\},\{(1,2),(1,3)\})$. Clearly, the family
of chordal graphs is a hereditary property with an infinite set of minimal
obstructions, but chordal graphs are characterized as those graphs that
admit a $B_1$-free acyclic orientation \cite{skrienJGT6}, i.e.\ a graph
$G$ is chordal if and only if it admits an acyclic orientation $G'$ such
that $B_1$ is not an induced subdigraph of $G'$.
Similarly, the Roy-Gallai-Hasse-Vitaver Theorem \cite{gallaiPCT,
hasseIMN28, royIRO1, vitaverDAN147} states that a graph is $k$-colourable
if and only if it admits an orientation with no directed walk on $k+1$
vertices. This work studies these kind of characterizations, that is,
characterizations of hereditary properties by forbidding certain
orientations.

For a set of oriented graphs, $F$, Skrien defined the class of $F$-graphs
as those graphs that admit an $F$-free orientation \cite{skrienJGT6}.
We believe this definition might be misleading in the sense that the class of
$F$-graphs is negatively defined with respect to $F$. We propose to invert
this definition. Given a class of oriented graphs $\mathcal{O}$ we define
the class of \textit{$\mathcal{O}$-graphs} as the family of underlying graphs
of $\mathcal{O}$. In other words, a graph $G$ is an $\mathcal{O}$-graph
if and only if there is an orientation $G'$ of $G$ such that $G'\in \mathcal{O}$.
For instance, in a basic graph theory course \cite{bondy2008} the student
learns that a graph $G$ is $2$-edge-connected, if and only if it admits a
strongly connected orientation. So if $\mathcal{O}$ is the class of strongly
connected oriented graphs,  then the class of $\mathcal{O}$-graphs is the
family of $2$-edge-connected graphs.  We are interested in classes of
$\mathcal{O}$-graphs where $\mathcal{O}$ is a hereditary property of
(acyclic) oriented graphs with finitely many forbidden substructures.

Consider a pair of (oriented) graphs, $G$ and $H$, if $G$ is
homomorphic to $H$, we will write $G \to H$, and we say that
$G$ is $H$-colourable. A \textit{homomorphism class} is a
class of graphs defined as those graphs homomorphic
to some fixed  graph $H$. Given a set of
oriented graphs, $F$, we denote by $Forb(F)$ the class of
oriented graphs, $G$, such that $H\not\to G$ for every
$H\in F$. For this work, an embedding is a homomorphism
$\varphi\colon G\to H$ such that $G$ is isomorphic to its image, $\varphi[G]$.
So $G$ embeds in $H$ if and only if $G < H$. We extend the
previously introduced notation and denote by $Forb_e(F)$ the class of oriented
graphs, $G$, such that $H\not< G$ for every $H\in F$. Note that the class of
$Forb_e(F)$-graphs corresponds to the class of $F$-graphs in the sense of
Skrien \cite{skrienJGT6}. We denote
by $Forb_e^\ast(F)$  the subclass of acyclic oriented graphs in $Forb_e(F)$.
In any of these cases, if $F$ is the singleton $\{L\}$, we will simply write
$Forb(L)$, $Forb_e(L)$ or $Forb_e^\ast(L)$. Going back to our previous
examples, the class of $Forb(\overrightarrow{P}_{k+1})$-graphs is the
class of $k$-colourable graphs, and a graph $G$ is a $Forb_e^\ast(B_1)$%
-graph if and only if $G$ is a chordal graph.

We say that a hereditary property, $\mathcal{P}$, is \textit{expressible by
$Forb$-graphs}, if there is a finite set of oriented graphs, $F$, such that
$\mathcal{P}$ corresponds to the class of $Forb(F)$-graphs.  In this
case we say that $Forb(F)$ is an \textit{expression of $\mathcal{P}$}.
Now suppose that $Forb(F)$ is an expression of $\mathcal{P}$,
and let $F'$ be the set of all homomorphic images of oriented graphs in
$F$. Then, $\mathcal{P}$ equates the class of $Forb_e(F')$-graphs. So
the expressive power of $Forb_e(F)$-graph classes is more robust than
the expressive power of $Forb(F)$-graph classes. We say that $\mathcal{P}$
is \textit{expressible by forbidden orientations} if there is a finite set,
$F$, of oriented graphs such that $\mathcal{P}$ coincides with
the class of $Forb_e(F)$-graphs. With a simple cardinality argument
one can notice that not every hereditary property is expressible by
forbidden orientations; there are countably many finite sets of oriented
graphs, while there is an uncountable class of hereditary properties.
This simple observations raises the fundamental question from which this
paper stems.

\begin{question}\label{q:basic}
Which hereditary properties are expressible by forbidden orientations?
\end{question}

It is very likely that Skrien did not have this question in mind when
working on \cite{skrienJGT6}. Nonetheless, his work provides
the first partial answer to this question: he provides a list of
graph classes that coincide to a class of $Forb_e(F)$-graphs when
$F$ is a set of orientations of $P_3$. Recently,
we extended his work by considering all sets of oriented graphs
on three vertices \cite{guzmanAR1}. The aforementioned manuscripts
illustrate one way to tackle Question~\ref{q:basic}: fix a finite set
of oriented graphs, $F$, and characterize the class of $Forb_e(F)$-graphs.
A second way to tackle this question is by fixing a hereditary property,
$\mathcal{P}$, and then (be lucky enough to) find a finite
set of oriented graphs, $F$, such that $\mathcal{P}$ corresponds to
the class of $Forb_e(F)$-graphs. We believe that the Roy-Gallai-Hasse%
-Vitaver Theorem can be considered the first result that aligns with
this approach. We certainly did so in \cite{guzmanAR2} where we
showed that for every odd cycle, $C$, there is an oriented path,
$P_C$, such that a graph $G$ is homomorphic to $C$ if and only
if $G$ is a $Forb(P_C)$-graph. Following the perspective behind the previously
mentioned results, in Section~\ref{sec:Forb} we propose a characterization
of those graphs, $H$, for which the class of $H$-colourable graphs is
expressible by $Forb$-orientations.  A third proceeding towards solving
Question~\ref{q:basic} is exhibiting hereditary properties that are not expressible
by forbidden orientations. As far as we are concerned, this is the
first write up to follow this path. Moreover, we believe that the main
contribution of this work is proposing strong necessary condition
upon certain graph classes to be expressible by forbidden orientations.
As a consequence of this result, we obtain an uncountable family
of hereditary properties that are not expressible by forbidden orientations.

As we will see, it turns out that some natural properties are not expressible
by  forbidden orientations, but they do correspond to some class of
$Forb_e^\ast(F)$-graphs for some finite set $F$. If such a finite set,
$F$, exists for some property, $\mathcal{P}$, we will say that
$\mathcal{P}$ is \textit{expressible by forbidden acyclic orientations}.
Some readers might consider this to be a devious tactic to obtain a finite
expression of the corresponding graphs class, as we are forbidding an
infinite set of  oriented graphs beforehand (all directed cycles). Rather
than dwelling on the validity of such technique, we will notice that most
of the results regarding forbidden orientations can be adapted to forbidden
acyclic orientations.

The structure of this work is as follows.  In Section~\ref{sec:Languages}, we
introduce some concepts and prove a couple of results of language theory
that we will use in Section~\ref{sec:Fgraphs}. Consequently, in
Section~\ref{sec:Fgraphs} we present our main results; we propose some
necessary conditions for a hereditary property to be expressible by forbidden
(acyclic) orientations, and we exhibit an uncountable family of hereditary
properties that are not expressible by forbidden (acyclic) orientations.
In Section~\ref{sec:Forb} we propose a characterization
of those homomorphism classes expressible by $Forb$-graphs.
Finally, in Section~\ref{sec:Conc} we present conclusions
and some problems that we think would be an interesting
follow up in this subject.

\section{Languages}\label{sec:Languages}

Consider a finite set $\mathcal{A}$, which we will call
an \textit{alphabet}. A \textit{word over} $\mathcal{A}$
is a finite sequence of elements in $\mathcal{A}$.
Whenever there is no ambiguity on the alphabet, we will
call a word over $\mathcal{A}$ only a \textit{word}.
The \textit{length} of a word $a$ is the number of elements in the sequence,
and we denote it by $|a|$. If $|a| = k$, we say that $a$ is a \textit{$k$-word}.
We denote by $\mathcal{A}_k$ the set of $k$-words over $\mathcal{A}$,
and by $\mathcal{A}^\ast$ the union of $\{\mathcal{A}_k\}_{k\ge 0}$, where
the only word in $\mathcal{A}_0$ is denoted by $\epsilon$ and it is called
the \textit{empty word}. A \textit{language} over $\mathcal{A}$ is a subset
$\mathcal{L}$ of $\mathcal{A}^\ast$.  Consider a pair words, $a$ and $b$,
$a=a_1a_2\dots a_k$ and $b = b_1b_2\dots b_l$, we denote by $ab$ the
word $a_1a_2\dots a_kb_1b_2\dots b_l$. For $n\ge 1$, we denote by $a^n$
the \textit{$nth$-power} of a word $a$, and it is defined recursively; $a^0 = \epsilon$ and
$a^n = a^{n-1}a$. We say that a word $a\in\mathcal{L}$ is \textit{$k$-periodic in
$\mathcal{L}$}, if $a^n\in\mathcal{L}$ for every $n\ge 1$, and $|a| = k$.
We say that $a$ is a \textit{constant word} if all the symbols in $a$ are the same.

We say that a word $a$ is a \textit{factor} of a word $b$
if there are two (possibly empty) words $c$ and $d$ such
that $cad = b$; in this case we write $a<b$. It is
not hard to notice that the relation induced by factors is a partial order in
$\mathcal{A}^\ast$. Let $A$ be a set of words, we say that $b$ is
\textit{$A$-free} if $b$ contains no factor in $A$. We denote the language
of $A$-free words by $\mathcal{L}_A$.  A language is \textit{hereditary} if it
is closed under factors. Note that for any set of words $A$ the language of
$A$-free words is hereditary. Let $m$ be a positive integer,  we say that a
language $\mathcal{L}$ is \textit{$m$-synchronizing}, if for any choice of words $a, b, d$ such that
$ba,ad\in\mathcal{L}$, if $|a| = m$, then $bad\in\mathcal{L}$.
Clearly, $\mathcal{L}$ is $m$-synchronizing if and only if for any pair
of words $ba,ad\in\mathcal{L}$, such that $|a|\ge m$ then $bad\in\mathcal{L}$.

\begin{observation}\label{finitesync}
Let $A$ be a finite set of words over any alphabet, and let $m$ a positive
integer, such that for every $a\in A$, $|a|\le m$, then $\mathcal{L}_A$ is
$m$-synchronizing.
\end{observation}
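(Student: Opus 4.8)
The plan is to verify the defining property of $m$-synchronizing languages for $\mathcal{L}_A$ directly, arguing by contradiction. Fix words $a,b,d$ with $ba\in\mathcal{L}_A$ and $ad\in\mathcal{L}_A$ and with $|a|=m$, and suppose for contradiction that $bad\notin\mathcal{L}_A$. By definition of $\mathcal{L}_A$ this means that some $w\in A$ is a factor of $bad$, and by hypothesis $|w|\le m=|a|$.

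The main step is to localize the occurrence of $w$. Write $b=b_1\cdots b_s$, $a=a_1\cdots a_m$, $d=d_1\cdots d_t$, so that $bad$ is the word of length $s+m+t$ whose $i$-th letter is $b_i$ for $i\le s$, is $a_{i-s}$ for $s<i\le s+m$, and is $d_{i-s-m}$ otherwise. An occurrence of $w$ as a factor of $bad$ is determined by a starting index $i\ge 1$ and ending index $i+|w|-1\le s+m+t$. I claim this occurrence is contained either in the window $[1,s+m]$, in which case $w$ is a factor of $ba$, or in the window $[s+1,s+m+t]$, in which case $w$ is a factor of $ad$. Indeed, if it were contained in neither, then $i\le s$ and $i+|w|-1\ge s+m+1$, which forces $|w|\ge m+1$, contradicting $|w|\le m$. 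This is the only place the hypothesis $|a|=m\ge|w|$ is used: the middle block $a$ is long enough that no short factor can straddle from inside $b$ to inside $d$.

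Once the occurrence is localized, we are done: in the first case $w<ba$ contradicts $ba\in\mathcal{L}_A$, and in the second case $w<ad$ contradicts $ad\in\mathcal{L}_A$. Hence $bad$ has no factor in $A$, so $bad\in\mathcal{L}_A$, and $\mathcal{L}_A$ is $m$-synchronizing. There is no serious obstacle here; the argument is entirely elementary, and the only point that needs a moment's care is the index arithmetic in the claim above — verifying that a factor of length at most $m$ cannot meet both $b$ and $d$ inside $bad$.
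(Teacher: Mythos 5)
Your proof is correct, and the index argument localizing any factor of length at most $m$ inside either the $ba$-window or the $ad$-window is exactly the elementary verification the paper leaves implicit: the statement is given as an Observation with no written proof, precisely because this straddling argument is considered immediate. Nothing further is needed; the edge cases $b=\epsilon$ or $d=\epsilon$ are also covered by your indexing.
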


We can naturally translate words over the alphabet $\{\la,\ra\}$ to oriented
paths. Denote by $W^k$ the set of oriented paths on $k$ edges, and by
$W^\ast$ the union of $\{W_k\}_{k \ge 0}$. Consider the surjective function
$t:\{\la,\ra\}^\ast\to W^\ast$, $t(a_1a_2\dots a_k) = P$,
where $P = v_1v_2\dots v_{k+1}$ such that $v_i\to v_{i+1}$
if $a_i = \ra$, and $v_{i+1}\to v_i$ otherwise. Clearly
$t$ is a monotone function, i.e., for
any $\{\la,\ra\}$-words, $a$ and $b$, if $a<b$ then $t(a)<t(b)$. Moreover, for
any oriented paths, $P$ and $Q$, if $P<Q$, then there are $\{\la,\ra\}$-words,
$p$ and $q$, such that $p<q$, $t(p) = P$, and $t(q) = Q$. The first item
of the following lemma follows from the three previous observations.

\begin{lemma}\label{lem:translatingProperties}
Let $F$ be a finite set of connected oriented graphs. Then, there is a set $A$ of
words over $\{\la,\ra\}$ and a positive integer $m$, such that for any positive
integer $k$ the following statements hold:
\begin{enumerate}
	\item if $k\ge m$, the path on $k$ edges admits an $F$-free
		orientation, if and only if there is a $k$-word in $\mathcal{L}_A$,
	\item if $k\ge \max\{m,4\}$, the cycle on $k$ edges admits an $F$-free
		orientation, if and only if there is a $k$-periodic word in $\mathcal{L}_A$,
		and
	\item if $k\ge \max\{m,4\}$, the cycle on $k$ edges admits an $F$-free
		acyclic orientation, if and only if there is a non-constant $k$-periodic
		word in $\mathcal{L}_A$.
\end{enumerate}
\end{lemma}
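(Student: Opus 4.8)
The plan is to construct the set $A$ of forbidden words and the integer $m$ directly from $F$, and then verify the three equivalences by translating between oriented structures and words via the map $t$. First I would set $m$ to be larger than the number of vertices in any member of $F$ (say $m = \max_{H\in F}|V(H)|$), and I would let $A$ be the set of all $\{\la,\ra\}$-words $p$ with $|p| \le m$ such that $t(p)$ contains some $H \in F$ as an induced subgraph; since $F$ is finite and we only look at words of bounded length, $A$ is finite, so Observation~\ref{finitesync} applies and $\mathcal{L}_A$ is $m$-synchronizing (this will be needed later, though not for this lemma's statement itself). The first item then follows from the three observations recorded just before the lemma: an orientation of the $k$-edge path is exactly a $\{\la,\ra\}$-word $p$ of length $k$, the orientation is $F$-free iff no $H\in F$ embeds in $t(p)$, and because each such $H$ is connected with at most $m$ vertices, $H < t(p)$ holds iff $H < t(q)$ for some factor $q < p$ with $|q| \le m$, i.e.\ iff $p$ has a factor in $A$. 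Hence the $k$-edge path has an $F$-free orientation iff some $k$-word is $A$-free, which is the claim.

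For the second item I would do the analogous translation for cycles. An orientation of the $k$-edge cycle $C_k$ corresponds to a cyclic word of length $k$ over $\{\la,\ra\}$; reading it starting from a fixed vertex and then going once more around gives a linear word, and the natural object capturing "this cyclic word avoids every pattern in $A$" is a $k$-periodic word in $\mathcal{L}_A$. Concretely: if $C_k$ has an $F$-free orientation, pick the corresponding cyclic word, let $a$ be the length-$k$ linear word obtained by cutting it at some vertex; then every factor of $a^n$ of length $\le m < k$ (using $k \ge \max\{m,4\}$, so in particular $k > m$ is arranged by taking $m$ large enough, or one argues with $\le m$ directly) appears as a sub-path of some orientation of a path embedded in the oriented $C_k$, hence is $A$-free, so $a^n \in \mathcal{L}_A$ for all $n$, i.e.\ $a$ is $k$-periodic in $\mathcal{L}_A$. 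Conversely, a $k$-periodic word $a \in \mathcal{L}_A$ yields, via $t$ applied cyclically to $a$, an orientation of $C_k$; any $H\in F$ that embedded in it, being connected on at most $m < k$ vertices, would embed as an oriented sub-path and thus force a forbidden factor inside $a^2 \in \mathcal{L}_A$, a contradiction. The hypothesis $k \ge 4$ is there so that $C_k$ is a genuine induced cycle (no multi-edges) and so that short cyclic patterns behave; I would keep track of the exact small-case bookkeeping here.

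For the third item I would simply observe that an orientation of $C_k$ is acyclic if and only if it is not the directed cycle, and under the correspondence of item~(2) the directed $k$-cycle corresponds exactly to the constant word $\ra^k$ (equivalently $\la^k$), i.e.\ to the $k$-periodic word that is constant. Hence $C_k$ has an $F$-free acyclic orientation iff it has an $F$-free orientation that is not the directed cycle, iff there is a $k$-periodic word in $\mathcal{L}_A$ that is not constant; combining with item~(2) gives the claim. The main obstacle, as usual in this kind of dictionary lemma, is not any single deep step but getting the boundary bookkeeping exactly right: ensuring $m$ is chosen so that every $H \in F$ that can embed in an oriented path or cycle does so within a window of length at most $m$, handling the fact that a connected oriented graph on at most $m$ vertices embedded in an oriented path or cycle is necessarily itself an oriented sub-path, and making sure the periodicity of the cyclic word genuinely captures "forbidden pattern wrapping around the cycle" — which is exactly where the $m$-synchronizing property (Observation~\ref{finitesync}) is conceptually doing its work and why $k \ge m$ (absorbed into $k \ge \max\{m,4\}$) is needed.
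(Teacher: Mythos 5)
Your proposal follows essentially the same route as the paper: the same translation $t$, the same construction of $A$ from $F$, factors of the (doubled/powered) word corresponding to induced oriented subpaths of the path or cycle, and the observation that the acyclic orientations of $C_k$ are exactly those whose word is non-constant. The one substantive point where your bookkeeping matters is the choice of $m$: the paper takes $m=\max\{|V(H)|\colon H\in F\}+1$, and the strict inequality $k>\max|V(H)|$ is genuinely needed, not just convenient. With your choice $m=\max|V(H)|$ the case $k=m$ can fail: take $F$ to be the set of all orientations of the $4$-cycle, so $m=4$ and $A=\emptyset$ (no oriented cycle embeds in an oriented path, so no word of bounded length detects these obstructions); then every $4$-word is $4$-periodic in $\mathcal{L}_A$, yet the $4$-cycle has no $F$-free orientation, so item~2 would be false at $k=m$. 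Hence your fallback ``or one argues with $\le m$ directly'' does not work, while your other fallback ``take $m$ large enough'' is exactly the paper's $+1$; with that fix in place (so that every connected member of $F$ embedding in an orientation of $C_k$ is forced to be an oriented path, never the whole cycle), the rest of your argument goes through as in the paper.
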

\begin{proof}
Let $m$ be the integer $\max\{|V(G)|\colon G\in F\}+1$, and let $A$ be the
set $\{a\in\{\la,\ra\}^\ast\colon t(a)\in F\}$. The first statement follows directly
using the observations preceding this lemma. Now we prove the third
statement. Let $k$ be an integer, $k\ge \max\{m,4\}$, and let
$C = (c_1,c_2,\dots,c_k,c_1)$ be non-directed oriented cycle.
First note that, since $k\ge \max\{m,4\}$, and by the choice of $m$, if there is
a graph $P\in F$ such that $P<C$, then $P$ is an oriented path. Consider the
$k$-word $l = l_1\dots l_k$, where for every $i\in\{1,\dots, k-1\}$, $l_i = \ra$ if
$c_i\to c_{i+1}$, and $l_i = \la$ otherwise, and $l_k = \ra$ if $c_k\to c_0$ and
$l_k=\la$ otherwise. Since $C$ is not a directed cycle, then $l$ is not a constant
word. Moreover, it is not hard to notice that there is a path $P\in F$ such that $P< C$
if and only if there is word $a\in A$ such that $a<l^2$. Similarly, if $h$ is a non-%
constant $k$-word such that $h^2\in \mathcal{L}_A$ we can find an $F$-free
acyclic orientation of the cycle on $k$-edges. Thus, the $k$-cycle $C_k$ admits
an $F$-free acyclic orientation, if and only if there is a non-constant $k$-word,
$l$, such that $l^2\in \mathcal{L}_A$. Now, note that if there is a non-constant
$k$-periodic word in $\mathcal{L}_A$, in particular there is a non-constant
$k$-word $l$ such that $l^2\in \mathcal{L}_A$. The converse implication also
holds since every word in $A$ is bounded by $m$, and $k\ge m$, so by
Observation~\ref{finitesync}, if $l^2\in\mathcal{L}_A$ then $l^3\in\mathcal{L}_A$,
so inductively we show that a word $l$ is $k$-periodic in $\mathcal{L}_A$ if
and only if $l^2\in\mathcal{L}_A$. Thus, the third statement holds, and the
second one follows an analogous proof.
\end{proof}

We define the \textit{set of (non-constant) periods} of $\mathcal{L}$ as the
positive integers, $k$, such that there is a (non-constant) $k$-periodic word in
$\mathcal{L}$. We denote these sets by $per(\mathcal{L})$ and
$per^\ast(\mathcal{L})$ respectively.  Lemma~\ref{lem:translatingProperties}.2
(\ref{lem:translatingProperties}.3) shows that if a property $\mathcal{P}$
is expressible by forbidden (acyclic) orientations then, there is a set of words
$A$ such that for any large enough integer, $k$, the $k$-cycle belongs
to $\mathcal{P}$, if and only if $k\in per(\mathcal{L}_A)$
($k\in per^\ast(\mathcal{L}_A)$). For a set of oriented
graphs $F$ we denote by $A_F$ the set of words $\{a\in\{\la,\ra\}^\ast\colon
t(a)\in F\}$. For instance, let $F = \{TT_3, \overrightarrow{C}_3,
\overrightarrow{P}_3\}$ (note that the class of $Forb_e(F)$-graphs is
the class of bipartite graphs due to the Roy-Gallai-Hasse-Vitaver Theorem).
In this case, $A_F = \{\la\la, \ra\ra\}$, and thus the binary language
of $A_F$-free words corresponds to those words such that no two
consecutive letters are the same.\footnote{Notice that if $F$ is a finite
set,  then $A_F$ is a finite set as well, and so the binary language
corresponding to the $F$-free orientations of paths is a regular language.}

\begin{lemma}\label{lem:cycleMultiples*}
Let $F$ be a finite set of connected oriented graphs, and $m$ the maximum
order of a graph in $F$. Then, the following statements are equivalent,
\begin{itemize}
	\item there is a positive integer $k$, $k\ge\max\{4,m\}$, such that
		the $k$-cycle admits an acyclic $F$-free orientation,
	\item there is a positive integer $k$, $k\ge\max\{4,m\}$, such that
		for every multiple of $k$, $r$, the $r$-cycle admits an acyclic
		$F$-free orientation, and
	\item there is a infinite set of cycles that admit an acyclic $F$-free
		orientation.
\end{itemize}
\end{lemma}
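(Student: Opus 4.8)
The plan is to push everything through the word encoding of Section~\ref{sec:Languages}, and to notice that, among the three equivalences, only one implication carries any content. I would prove the cycle $(1)\Rightarrow(2)\Rightarrow(3)\Rightarrow(1)$. The two ``easy'' implications are immediate: $(2)\Rightarrow(3)$ holds because for a fixed $k$ the multiples of $k$ already form an infinite family of cycles, and $(2)$ asserts that each of them admits an acyclic $F$-free orientation; $(3)\Rightarrow(1)$ holds because an infinite set of cycles must contain cycles of arbitrarily large length, in particular one of length at least $\max\{4,m\}$, and by hypothesis that cycle admits an acyclic $F$-free orientation.

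The substance is $(1)\Rightarrow(2)$, which I would argue as follows. Assume $C_k$ admits an acyclic $F$-free orientation for some $k\ge\max\{4,m\}$. By the third item of Lemma~\ref{lem:translatingProperties}, applied with the set $A_F$, there is a non-constant $k$-periodic word $w\in\mathcal{L}_{A_F}$, i.e.\ $w^n\in\mathcal{L}_{A_F}$ for every $n\ge 1$. Fix an arbitrary multiple $r=nk$ of $k$. Then $w^n$ has length $r$, it is non-constant because $w$ is, and $(w^n)^j=w^{nj}\in\mathcal{L}_{A_F}$ for every $j\ge 1$; hence $w^n$ is a non-constant $r$-periodic word in $\mathcal{L}_{A_F}$. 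Since $r\ge k\ge\max\{4,m\}$, the third item of Lemma~\ref{lem:translatingProperties} again yields an acyclic $F$-free orientation of $C_r$. As $r$ ranged over all multiples of $k$, this is exactly statement $(2)$.

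The only delicate bookkeeping — and the only place I expect to need care — is the numerical threshold: Lemma~\ref{lem:translatingProperties}.3 is phrased for cycle lengths at least $\max\{4,m'\}$ with $m'=\max\{|V(G)|:G\in F\}+1$, which is one more than the constant $m$ in the present statement, so a careful write-up should either read $m$ consistently with that lemma or dispatch the single borderline case $k=m$ by hand. The latter is routine: since every word of $A_F$ has length less than $m$, the language $\mathcal{L}_{A_F}$ is $m$-synchronizing by Observation~\ref{finitesync}, and from this one checks by induction that a non-constant $k$-word $w$ with $k\ge m$ and $w^2\in\mathcal{L}_{A_F}$ is automatically $k$-periodic in $\mathcal{L}_{A_F}$, which recovers the translation between acyclic $F$-free orientations of $C_k$ and non-constant periodic words without the extra unit of slack. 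Beyond this, I do not anticipate any real obstacle: the entire content of the lemma is the observation that a power of a periodic word is again a periodic word of the appropriate length.
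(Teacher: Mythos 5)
Your main argument is exactly the paper's: the two easy implications are handled the same way, and the implication from the first item to the second is proved by extracting a non-constant $k$-periodic word via Lemma~\ref{lem:translatingProperties}.3 and observing that its $l$-th power is again a non-constant periodic word, of length $kl$. On the core content there is nothing separating your proof from the one in the paper.

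Where your write-up goes wrong is precisely the borderline case you flag yourself. You are right that Lemma~\ref{lem:translatingProperties}.3 is only guaranteed for $k\ge\max\{4,m+1\}$ (its constant is the maximum order plus one), but your proposed repair --- that $m$-synchronization lets one ``recover the translation between acyclic $F$-free orientations of $C_k$ and non-constant periodic words without the extra unit of slack'' --- does not work. Synchronization does give that $w^2\in\mathcal{L}_{A_F}$ forces $w$ to be periodic, but it gives nothing towards the other half of the translation: when $k=m$, an $F$-free acyclic orientation of $C_k$ need not produce a word $w$ with $w^2\in\mathcal{L}_{A_F}$, because a member of $F$ of order exactly $m$ can occur in $C_m$ as a spanning, \emph{non-induced} subgraph (its vertex set induces the whole cycle), so the corresponding factor appears in $w^2$ even though the orientation is $F$-free. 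In fact the statement itself fails at this borderline: let $F$ be the set of all orientations of the path on five vertices, so $m=5$. Every acyclic orientation of $C_5$ is $F$-free, since the only five-vertex induced subdigraph of $C_5$ is the oriented cycle itself; hence the first item holds with $k=5$. But for every $r\ge 6$, any five consecutive vertices of $C_r$ induce an oriented path on five vertices, which lies in $F$, so no cycle of length at least $6$ admits any $F$-free orientation, and the second and third items fail. So the borderline case cannot be ``dispatched by hand''; the correct reading is that $m$ in this lemma should be the constant furnished by Lemma~\ref{lem:translatingProperties} (maximum order plus one) --- which is also what the paper's own proof tacitly assumes when it invokes that lemma --- and with that threshold your argument is complete and coincides with the paper's.
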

\begin{proof}
Clearly the first item is a particular case of the third one, while the latter
is an implication of the second one. We now prove the first item implies the
second one. Let $A = A_F$, and suppose that a cycle on $k$ edges,
$k\ge \max\{4,m\}$, admits an $F$-free acyclic orientation.  By
Lemma~\ref{lem:translatingProperties}.3, there is a non-constant $k$-periodic
word $p\in\mathcal{L}_A$. Thus, for every $n\ge 1$, $p^n\in$ $\mathcal{L}_A$,
and since, for every positive integer, $l$, the equality $(p^l)^n = p^{ln}$ holds,
then $p^l$ is periodic in $\mathcal{L}_A$. Clearly $p^l$ is not a constant word,
and $|p^l|=kl$. So by Lemma~\ref{lem:translatingProperties}.3, for every multiple
of $k$, $r = kl$, the cycle on $kl$-edges admits an $F$-free acyclic orientation.
\end{proof}

The equivalent statement of Lemma~\ref{lem:cycleMultiples*} can be translated
(with the same proof) to $F$-free (not necessarily acyclic) orientations of cycles.
Moreover, there are two more equivalent statements when we do not restrict
ourselves to acyclic orientations.

\begin{lemma}\label{lem:largePaths}
Let $F$ be a finite set of oriented graphs, and $m$ the maximum order of a graph
in $F$. Then, the following statements are equivalent,
\begin{itemize}
	\item there is a positive integer $k$, $k\ge\max\{4,m\}$, such that the
		$k$-cycle admits an $F$-free orientation,
	\item there is a positive integer $k$, $k\ge\max\{4,m\}$, such that
		for every multiple of $k$, $r$, the $r$-cycle admits an $F$-free
		orientation,
	\item there is a infinite set of cycles that admit an $F$-free orientation,
	\item every path admits an $F$-free orientation, and
	\item $\mathcal{L}_A$ is infinite, where $A = A_F$.
\end{itemize}
\end{lemma}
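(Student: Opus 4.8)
The plan is to run everything through the word encoding of Lemma~\ref{lem:translatingProperties} (with $A=A_F$) and to isolate a single non-trivial fact: \emph{an infinite, hereditary, $m$-synchronizing language over a finite alphabet contains a periodic word}. Given that fact, the chain $(1)\Leftrightarrow(2)\Leftrightarrow(3)\Rightarrow(4)\Rightarrow(5)\Rightarrow(3)$ closes the lemma, since $(1)\Leftrightarrow(2)\Leftrightarrow(3)$ is precisely the non-acyclic analogue of Lemma~\ref{lem:cycleMultiples*}, which the text notes holds with the same proof. So I would first dispose of the two easy new implications. For $(3)\Rightarrow(4)$: an infinite family of $F$-free orientable cycles contains cycles on arbitrarily many edges, and deleting one vertex from an $F$-free oriented $k$-cycle gives an $F$-free oriented path on $k-1$ vertices (the class of $F$-free oriented graphs is closed under induced oriented subgraphs); since every path is an induced subpath of a sufficiently long one, every path admits an $F$-free orientation. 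For $(4)\Rightarrow(5)$: by Lemma~\ref{lem:translatingProperties}.1 an $F$-free orientation of the path on $k$ edges yields a $k$-word in $\mathcal{L}_A$ for all large $k$, so $\mathcal{L}_A$ is infinite.

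The heart of the argument is the isolated fact, applied to $\mathcal{L}_A=\mathcal{L}_{A_F}$. This language is hereditary by construction, and it is $m$-synchronizing by Observation~\ref{finitesync} because every word of $A_F$ has length at most $m$ (its image under $t$ is an oriented path on at most $m$ vertices). Assuming $\mathcal{L}_A$ infinite, it contains arbitrarily long words; pick $w\in\mathcal{L}_A$ long enough to contain two occurrences of some length-$m$ word $u$. Let $v$ be the factor of $w$ beginning at the first occurrence and ending at the second, and let $p$ be the nonempty block of $v$ preceding the second occurrence of $u$, so $v=pu$; since $v$ also begins with $u$, we may write $v=u\delta$. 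Applying $m$-synchronization through the length-$m$ block $u$ gives $ppu=p(u\delta)\in\mathcal{L}_A$, and the same step iterated gives $p^{n}u\in\mathcal{L}_A$ for every $n\ge1$; by heredity $p^{n}\in\mathcal{L}_A$ for every $n$, so $p$ is periodic in $\mathcal{L}_A$. (Alternatively, since $A_F$ is finite the language $\mathcal{L}_A$ is regular, and one may invoke the pumping lemma: a long enough word of $\mathcal{L}_A$ factors as $xyz$ with $y$ nonempty and $xy^{n}z\in\mathcal{L}_A$ for all $n$, and then $y^{n}\in\mathcal{L}_A$ by heredity, since $y^{n}$ is a factor of $xy^{n}z$.)

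It remains to deduce $(5)\Rightarrow(3)$ from the isolated fact. If $\mathcal{L}_A$ is infinite, take the periodic word $p$ just produced; for each $\ell\ge1$ the power $p^{\ell}$ satisfies $(p^{\ell})^{n}=p^{\ell n}\in\mathcal{L}_A$, so $p^{\ell}$ is $(\ell|p|)$-periodic, and choosing $\ell$ with $\ell|p|\ge\max\{4,m\}$, Lemma~\ref{lem:translatingProperties}.2 gives an $F$-free orientation of the $(\ell|p|)$-cycle. Letting $\ell$ range over all large values produces infinitely many $F$-free orientable cycles, which is $(3)$. This closes the equivalence.

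I expect the isolated fact to be the only real obstacle: the translations of Lemma~\ref{lem:translatingProperties} and the cited equivalence of $(1)$, $(2)$, $(3)$ make the rest routine bookkeeping, but heredity alone is genuinely insufficient to force a periodic word (the factor language of the Thue--Morse word is infinite and hereditary yet contains no periodic word), so some extra structure must be used — and $m$-synchronization, guaranteed here by Observation~\ref{finitesync}, is exactly that structure.
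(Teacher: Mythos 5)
Your proposal is correct and follows essentially the same route as the paper: the equivalence of the first three items is delegated to the (non-acyclic) analogue of Lemma~\ref{lem:cycleMultiples*}, items (3)$\Rightarrow$(4) and (4)$\Leftrightarrow$(5) come from heredity of $F$-freeness and Lemma~\ref{lem:translatingProperties}.1, and your ``isolated fact'' is exactly the paper's pumping step, where a long word of $\mathcal{L}_A$ is shown to contain a factor $aba$ with $|a|\ge m$ (your two occurrences of a length-$m$ factor $u$, with $p=ab$) and Observation~\ref{finitesync} plus heredity force $(ab)^n\in\mathcal{L}_A$ for all $n$, yielding a long periodic word and hence, via Lemma~\ref{lem:translatingProperties}.2, the required $F$-free orientable cycles. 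The only cosmetic difference is that you close the cycle of implications at item (3) by taking all powers of the periodic word, whereas the paper closes it at item (1) with a single power.
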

\begin{proof}
To prove the equivalence between the first three items one can follow an
analogous proof to Lemma~\ref{lem:cycleMultiples*}. The final two statements
are equivalent due to Lemma~\ref{lem:translatingProperties}.1. To show that
the third item implies the fourth one, it suffices to notice that if a graph
$G$ admits an $F$-free orientation and $H<G$, $H$, admits an $F$-free
orientation. Since every path can be embedded in any sufficiently large
cycle, we conclude that the third item implies the fourth one. Finally, we
show that the last statement implies the first one. So we assume
there are arbitrarily large words in $\mathcal{L}_A$. Since there is only
a finite amount of $m$-words, by taking a large enough word
$w\in\mathcal{L}_A$, we can find a factor of $w$ of the form $aba$, where
$|a|\ge m$ and $b$ is possibly an empty word. Thus, by
Observation~\ref{finitesync} $ababa\in\mathcal{L}_A$, so
$abab\in \mathcal{L}_A$. By recursively using Observation~\ref{finitesync},
we prove that for any $n\ge 1$, $(ab)^n\in\mathcal{L}_A$.
Thus $(ab)^4$ is a periodic word such that $|(ab)^4|\ge 4m\ge\max\{4,m\}$,
and so by Lemma~\ref{lem:translatingProperties}.2, there is an
$F$-free orientation of a cycle on at least $\max\{4,m\}$ edges.
\end{proof}

Lemmas~\ref{lem:cycleMultiples*} and \ref{lem:largePaths} are the first results
that yield necessary conditions for a hereditary property to be expressible by
forbidden orientations, and forbidden acyclic orientations, respectively.

The following statement is a basic arithmetic result. The reader could prove it
as an exercise to not forget our basic courses of algebra and number theory,
or can refer to Appendix D \cite{collet2018} for a proof. For a set
of positive integers $A$, we denote the \textit{greatest common divisor}
of $A$ by $gcd(A)$. An integer, $l$, is a \textit{positive combination} of
a set of numbers $\{a_1,\dots,a_k\}$, if $l = m_1a_1+\dots+m_ka_k$
where $m_i$ is a positive integer for every $i\in\{1,\dots, k\}$.

\begin{lemma}\label{collet}\cite{collet2018}
For any infinite set of positive integers, $A$, with greatest common
divisor $r$, there is a finite subset $B\subseteq A$ such that
$gcd(B) = r$. Moreover, if $A$ is closed under addition, then $A$
is cofinite in $r\mathbb{Z}^+$, i.e., the complement of $A$ in $r\mathbb{Z}^+$
is finite.
\end{lemma}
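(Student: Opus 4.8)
The plan is to prove the two assertions separately: the first by a simple descent on greatest common divisors, and the second by reducing to the classical Frobenius (Chicken McNugget) fact that, for coprime integers, every sufficiently large number is a non-negative integer combination of them.

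For the first assertion I would build $B$ greedily. Pick any $a_1\in A$ and set $d_1=a_1$. Having chosen $a_1,\dots,a_i\in A$, put $d_i=\gcd(a_1,\dots,a_i)$ and note $r\mid d_i$ because $r\mid a$ for all $a\in A$. If $d_i=r$, stop and output $B=\{a_1,\dots,a_i\}$. Otherwise $d_i>r=\gcd(A)$, so $d_i$ cannot divide every element of $A$ (else $d_i\mid\gcd(A)=r$), and I may pick $a_{i+1}\in A$ with $d_i\nmid a_{i+1}$, so that $d_{i+1}=\gcd(d_i,a_{i+1})<d_i$. Since $d_1>d_2>\cdots$ is a strictly decreasing sequence of positive integers the process halts, necessarily at a value dividing every element of $A$ and hence equal to $r$; this yields the finite set $B$ with $\gcd(B)=r$.

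For the second assertion I would first reduce to $r=1$ by replacing $A$ with $\{a/r\colon a\in A\}$, which is again closed under addition, has gcd $1$, and is cofinite in $\mathbb{Z}^+$ exactly when the original $A$ is cofinite in $r\mathbb{Z}^+$. So assume $\gcd(A)=1$ and, by the first part, fix a finite $B=\{b_1,\dots,b_k\}\subseteq A$ with $\gcd(B)=1$; since $A$ is closed under addition, every positive combination $m_1b_1+\dots+m_kb_k$ (all $m_i\ge 1$) belongs to $A$. It thus suffices to show all large integers are positive combinations of $b_1,\dots,b_k$. Because $\gcd(b_1,\dots,b_k)=1$, the images of $b_2,\dots,b_k$ generate $\mathbb{Z}/b_1\mathbb{Z}$, so for each residue $\rho$ modulo $b_1$ there are non-negative integers $x_2,\dots,x_k$ with $x_2b_2+\dots+x_kb_k\equiv\rho\pmod{b_1}$; let $M$ be the maximum over the finitely many residues of these values. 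Any $n\ge M$ is then a non-negative combination (take the representation for $\rho\equiv n$ and absorb the remainder, a non-negative multiple of $b_1$, into the coefficient of $b_1$), and any $n\ge M+(b_1+\dots+b_k)$ is a positive combination after adding one copy of each $b_i$. Hence $A$ contains every integer exceeding $M+b_1+\dots+b_k$, so it is cofinite in $\mathbb{Z}^+$.

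The only mildly delicate point I anticipate is the passage from Bézout-type integer combinations to genuinely positive combinations, since closure under addition gives no access to subtraction; the residue-class bookkeeping above is exactly what handles this, and the reduction to $r=1$ keeps the notation clean. Everything else is routine, which is why the statement can safely be quoted from \cite{collet2018}.
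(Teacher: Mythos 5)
Your proof is correct. The paper itself does not prove this lemma---it is quoted from Appendix D of \cite{collet2018}---so there is no in-paper argument to compare against; your gcd-descent for the existence of the finite subset $B$ and the Frobenius/Schur-type residue argument (reduce to $r=1$, fix coprime $b_1,\dots,b_k\in A$, represent each residue class modulo $b_1$ by a non-negative combination, then absorb the remaining multiple of $b_1$ and add one copy of each $b_i$ to make the combination positive) are exactly the standard arguments the citation points to, and every step checks out, including the delicate passage from integer to non-negative and then positive coefficients.
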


These are all the results we need to proceed to Section~\ref{sec:Fgraphs}.
The three remaining results of this section build up to a language theoretic
result that is within reach now.

We say that a set of positive integers $A$ satisfies the \textit{weak addition
property}, if there are: a finite subset $B\subseteq A$, $B = \{b_1,\dots, b_k\}$,
such that $gcd(B) = gcd(A)$, and a multiple of $gcd(A)$, $l$, such that for
every positive combination of elements in $B$, $c = m_1b_1+\dots +m_kb_k$,
the integer $l+c$ belongs to $A$.

\begin{lemma}\label{cofinite}
Let $A$ be a set of positive integers with greatest common divisor
$r$. If $A$ satisfies the weak addition property, then $A$ is cofinite in
$r\mathbb{Z}^+$.
\end{lemma}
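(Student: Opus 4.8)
Let me set up the plan. We have a set $A$ of positive integers with $\gcd(A) = r$, and we assume the weak addition property: there is a finite $B = \{b_1,\dots,b_k\} \subseteq A$ with $\gcd(B) = r$, and a multiple $l$ of $r$ such that $l + c \in A$ for every positive combination $c = m_1 b_1 + \dots + m_k b_k$. We want to conclude that $A$ is cofinite in $r\mathbb{Z}^+$.

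The plan is to reduce to the classical "numerical semigroup / Chicken McNugget" situation already packaged for us in Lemma~\ref{collet}. First I would consider the set $S$ of all positive combinations of elements of $B$, i.e. $S = \{m_1 b_1 + \dots + m_k b_k : m_i \in \mathbb{Z}^+\}$. I claim $S$ is closed under addition: adding two positive combinations of the $b_i$ yields another positive combination (the coefficients stay positive). Also $\gcd(S) = \gcd(B) = r$, since every element of $S$ is a multiple of $r$, and $S$ contains, for instance, each $b_i + (b_1 + \dots + b_k)$, and more to the point $S \supseteq B + (b_1+\dots+b_k)$ so $\gcd$ of $S$ divides a shifted copy of $B$; one checks directly that $\gcd(S) = r$ by noting $S$ is infinite with the right gcd — actually the cleanest route is: $S$ is closed under addition and infinite, and $\gcd(S)$ divides every element of $B$ shifted, hence $\gcd(S) \mid b_i - b_j$ for all $i,j$ and $\gcd(S)\mid b_1 + \dots + b_k$, forcing $\gcd(S) = r$. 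Then by the "moreover" part of Lemma~\ref{collet}, $S$ is cofinite in $r\mathbb{Z}^+$: there is some $N$ such that every multiple of $r$ that is at least $N$ lies in $S$.

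Now the shift finishes it. The weak addition property gives $l + S \subseteq A$. Since $S$ contains every multiple of $r$ that is $\ge N$, the translate $l + S$ contains every multiple of $r$ that is $\ge l + N$. (Here $l$ is itself a multiple of $r$, so $l + S \subseteq r\mathbb{Z}^+$, and the arithmetic of the translation is clean.) Hence $A \supseteq l + S$ contains all sufficiently large multiples of $r$, i.e. $A$ is cofinite in $r\mathbb{Z}^+$, which is exactly the claim.

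I expect the only genuine friction to be the bookkeeping around $\gcd(S) = r$ and making sure "positive combination" (all coefficients $\ge 1$, as the paper defines it) rather than "nonnegative combination" does not cause an off-by-a-constant problem — but it does not, since restricting to positive coefficients only shifts the semigroup by $b_1 + \dots + b_k$, which is absorbed into the constant $N$. Everything else is a direct appeal to Lemma~\ref{collet} followed by a translation by the fixed multiple $l$.
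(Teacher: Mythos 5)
Your proposal is correct and follows essentially the same route as the paper: form the set $S$ of positive combinations of $B$, note it is closed under addition with $\gcd(S)=r$, invoke the ``moreover'' part of Lemma~\ref{collet} to get that $S$ is cofinite in $r\mathbb{Z}^+$, and then translate by $l$ to land inside $A$. The only cosmetic difference is your slightly roundabout justification of $\gcd(S)=r$ (which the paper simply asserts), and that is easily tightened by noting $\gcd(S)$ divides both $b_1+\dots+b_k$ and $b_i+(b_1+\dots+b_k)$, hence each $b_i$.
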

\begin{proof}
Let $B\subseteq A$ be a finite subset such that $gcd(B) = r$, let $S$ be
the set of positive combinations of elements in $B$, and let $l$ be a multiple
of $r$ such that for every $s\in S$, $l+s\in A$. Clearly, $S$ is closed under
addition and it is also not hard to notice that $gcd(S) = gcd(B) = r$. Thus,
by Lemma~\ref{collet}, $S$ is cofinite in $r\mathbb{Z}^+$, and since $l$ is
a multiple of $r$, then $l+S = \{l+s\colon s\in S\}$ is cofinite in $r\mathbb{Z}^+$.
Recall that, by the choice of $A$ and $l$, $l+S\subseteq A$, so $A$ is also
cofinite in $r\mathbb{Z}^+$.
\end{proof}

We say that a language $\mathcal{L}$ is \textit{transitive} if for every two words
$a,b\in \mathcal{L}$, there is a third (possibly empty) word $d$, such that
$adb\in\mathcal{L}$. As a temporary and convenient definition, we say that
the greatest common divisor of an empty set is $0$.

\begin{lemma}\label{lem:periods}
Let $m$ be a positive integer, let $\mathcal{L}$ be a hereditary, transitive,
$m$-synchronizing language, and let $r = gcd(per(\mathcal{L}))$. Then,
$per(\mathcal{L})$ is a cofinite subset of $r\mathbb{Z}^+$.
\end{lemma}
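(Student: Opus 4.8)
The plan is to verify that $\mathcal{L}$ satisfies the weak addition property for the set $per(\mathcal{L})$, and then invoke Lemma~\ref{cofinite}. The key observation is that if $a$ is $k$-periodic in $\mathcal{L}$, then for a suitable ``padding'' length $l$ one can concatenate several such periodic words and remain in $\mathcal{L}$, so that the set of periods is, up to an additive constant, closed under positive combinations. Here is how I would carry it out.

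First I would fix a finite subset $B = \{k_1,\dots,k_t\} \subseteq per(\mathcal{L})$ with $\gcd(B) = \gcd(per(\mathcal{L})) = r$; such a $B$ exists by Lemma~\ref{collet}. For each $i$, pick a word $p_i \in \mathcal{L}$ that is $k_i$-periodic, so $p_i^n \in \mathcal{L}$ for all $n \ge 1$ and $|p_i| = k_i$. The goal is to show that for every positive combination $c = m_1 k_1 + \dots + m_t k_t$, the integer $l + c$ lies in $per(\mathcal{L})$ for some fixed multiple $l$ of $r$; the natural candidate is $l = k_1 + \dots + k_t$ (or a small multiple thereof), absorbed by taking one extra copy of each $p_i$.

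The heart of the argument is building, from the $p_i$'s, a word $w$ of length exactly $l + c$ that is periodic in $\mathcal{L}$. The idea: since $\mathcal{L}$ is transitive, repeatedly splice the powers $p_1^{m_1+1}, p_2^{m_2+1}, \dots, p_t^{m_t+1}$ together using connecting words $d_1,\dots,d_{t-1}$ and a closing word $d_t$ to get a cyclic arrangement $w = p_1^{m_1+1} d_1 p_2^{m_2+1} d_2 \cdots p_t^{m_t+1} d_t$ living in $\mathcal{L}$ — but the connecting words ruin the length count and periodicity. This is where $m$-synchronization does the work: because each $p_i^{m_i+1}$ contains the factor $p_i^{m}$ (we may assume each $k_i \ge 1$, and after passing to a high enough power we get an overlap of length $\ge m$ at each seam), the synchronizing property lets us excise the connector words $d_j$ one seam at a time, replacing a local pattern $x\,p_i^{m}\,y$ plus $x'\,p_i^{m}\,y'$-type overlap by the concatenation without the intervening material. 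Iterating, I would obtain that $p_1^{m_1+1} p_2^{m_2+1} \cdots p_t^{m_t+1} \in \mathcal{L}$, and then — again by $m$-synchronization applied to the seam between the last block and a fresh copy of the whole word — that this word is in fact periodic in $\mathcal{L}$. Its length is $(m_1+1)k_1 + \dots + (m_t+1)k_t = c + l$ with $l = k_1 + \dots + k_t$, a multiple of $r$ (indeed $l \in r\mathbb{Z}^+$ since each $k_i$ is). Hence $l + c \in per(\mathcal{L})$ for every positive combination $c$, which is precisely the weak addition property for $per(\mathcal{L})$; Lemma~\ref{cofinite} then yields that $per(\mathcal{L})$ is cofinite in $r\mathbb{Z}^+$.

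The step I expect to be the main obstacle is making the splice-and-excise argument airtight: I must be careful that the synchronizing deletions at one seam do not destroy the long periodic factors needed for the next seam, and that ``$p_i^{m_i+1}$ contains $p_i^m$ as a factor of length $\ge m$'' is used correctly (this may force replacing $m_i+1$ by $m_i + m$, which only changes $l$ to $\sum (m-1+1)k_i = m\sum k_i$, still a fixed multiple of $r$, so the conclusion is unaffected). One must also confirm that the concatenation of periodic words, after removing connectors, is genuinely periodic and not merely a member of $\mathcal{L}$ — this is exactly the point where one appeals to $m$-synchronization between a word and a copy of itself, so the length of the repeated block must be at least $m$, which holds once $\sum k_i \ge m$, arrangeable by taking $B$ large enough or by raising the relevant powers.
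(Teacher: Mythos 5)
Your overall strategy coincides with the paper's: verify the weak addition property for $per(\mathcal{L})$ and invoke Lemma~\ref{cofinite}, using transitivity to link periodic blocks and $m$-synchronization to glue. The genuine gap is the excision step. The $m$-synchronizing property is purely a gluing rule: from $ba,ad\in\mathcal{L}$ with $|a|\ge m$ it yields $bad\in\mathcal{L}$; it never allows you to delete a connector $d_j$, because the word obtained after deletion contains the seam factor (a block of $p_i$'s followed immediately by a block of $p_{i+1}$'s) which is not a factor of any word you have certified to be in $\mathcal{L}$, so neither hypothesis of the rule is available at that seam. Indeed your intermediate claim $p_1^{m_1+1}p_2^{m_2+1}\cdots p_t^{m_t+1}\in\mathcal{L}$ is false in general: over the alphabet $\{a,b,c\}$ let $A=\{ab\}$; then $\mathcal{L}_A$ is hereditary, transitive (use the connector $c$) and $2$-synchronizing by Observation~\ref{finitesync}, the one-letter words $a$ and $b$ are $1$-periodic, yet $a^{s}b^{t}\notin\mathcal{L}_A$ for all $s,t\ge 1$. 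The same defect reappears when you try to prove the cleaned word periodic: gluing $W$ to a fresh copy of itself requires the wrap-around seam (end of the $p_t$-block against the start of the $p_1$-block) to be certified, and it is not.

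The repair is precisely not to insist on a clean length, and this is what the paper's proof does. The weak addition property allows an arbitrary fixed $l$, so one keeps the connectors and absorbs their total length into $l$ (which is automatically a multiple of $r$, since both $c$ and $l+c$ are). Concretely: after arranging that every block has length at least $m$ (done in the paper by multiplying the chosen periods by distinct large primes, which preserves the gcd --- note that your alternative of ``raising the relevant powers'' needs the same care, since replacing the $k_i$ by common multiples can change $\gcd(B)$, and also $m_i+1\ge m$ is not guaranteed for an arbitrary positive combination), transitivity provides words $\beta_i$ with $\alpha_i\beta_i\alpha_{i+1}\in\mathcal{L}$ and, crucially, a closing word $\beta_k$ with $\alpha_k\beta_k\alpha_1\in\mathcal{L}$. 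Synchronizing along whole blocks $\alpha_i$ gives $\alpha_1^{m_1}\beta_1\alpha_2^{m_2}\beta_2\cdots\alpha_k^{m_k}\beta_k\alpha_1\in\mathcal{L}$; this word has the form $\alpha_1\gamma\alpha_1$ with $|\alpha_1|\ge m$, so synchronization along the common block $\alpha_1$ together with heredity yields $(\alpha_1\gamma)^n\in\mathcal{L}$ for every $n$, whence $l+c\in per(\mathcal{L})$ with $l=|\beta_1|+\cdots+|\beta_k|$, and Lemma~\ref{cofinite} concludes.
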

\begin{proof}
The case when $per(\mathcal{L})$ is empty is clear. So, we assume that
 $per(\mathcal{L})$ is not empty. We will show that $per(\mathcal{L})$ satisfies
the weak addition property, and thus conclude by Lemma~\ref{cofinite}.
By Lemma~\ref{collet}, we can choose a finite set $A$, $A=\{a_1,\dots,a_k\}
\subset per(\mathcal{L})$, such that $gcd(A) = gcd(per(\mathcal{L}))$.
We can assume that $\min\{a_1,\dots,a_k\}\ge m$. If this was not the case,
let $p_1,\dots,p_k$ be distinct primes greater than $\max\{a_1,\dots,a_k\}$,
such that $p_ia_i\ge m$ for every $i\in\{1,\dots,k\}$. Clearly,
$gdc(\{p_1a_1,\dots,p_ka_k\}) = gdc(A)$, and since for every
$1\le i\le k$,  there is a periodic word $\alpha_i$ such that $|\alpha_i|
= a_i$, then $\alpha_i^{p_i}$ is  a periodic word of length $a_ip_i$, i.e.\
$a_ip_i\in per(\mathcal{L})$. Thus, without loss of generality we will
assume that $\min\{a_1,\dots,a_k\}\ge m$. Now, let us observe that
there is a positive integer $l$ such that for any positive combination of
elements of $A$, $c=m_1a_1+\dots+m_ka_k$, there is a $(c+l)$-periodic
word in $\mathcal{L}$. For every $i\in\{1,\dots, k\}$ let $\alpha_i$ by an
$a_i$-periodic word in $\mathcal{L}$.
Since $\mathcal{L}$ is a transitive language, for any $\alpha_i$ with
$1\le i< k$ there is a word $\beta_i$ such that $\alpha_i\beta_i\alpha_{i+1}$
$\in\mathcal{L}$, and there is a word $\beta_k$ such that
$\alpha_k\beta_k\alpha_1\in \mathcal{L}$. Let $m_1a_1+\dots+m_ka_k$
be a positive combination of $A$, that is $m_i\ge 1$.  Recall
that $\mathcal{L}$ is $m$-synchronizing, and since $\min\{a_1,\dots,a_k\}$
$\ge m$, then
$\alpha_1^{m_1}\beta_1\alpha_2^{m_2}\beta_2\dots\alpha_k^{m_k}\beta_k\alpha_1$
belongs to $\mathcal{L}$. Let
$\gamma= \alpha_1^{m_1-1}\beta_1\alpha_2^{m_2}\beta_2\dots
\alpha_k^{m_k}\beta_k$,  we proceed to show that $\alpha_1\gamma$ is periodic in
$\mathcal{L}$. Since  $|\alpha_1|\ge m$ and $\alpha_1\gamma\alpha_1 =
\alpha_1^{m_1}\beta_1\alpha_2^{m_2}\beta_2\dots\alpha_k^{m_k}\beta_k
\alpha_1\in\mathcal{L}$, then  $\alpha_1\gamma\alpha_1\gamma\alpha_1
\in\mathcal{L}$. Recall that $\mathcal{L}$ is hereditary, so $\alpha_1\gamma
\alpha_1\gamma\in\mathcal{L}$,  hence, we inductively conclude that
$(\alpha_1\gamma)^n\in\mathcal{L}$. So $|\alpha_1\gamma|\in per(\mathcal{L})$.
By construction of $\gamma$, $|\alpha_1\gamma| = m_1a_1+\dots+m_ka_k + l$,
where $m_1a_1+\dots+m_ka_k$ is any positive combination of $A$, and
$l = |\beta_1|+ \dots+|\beta_k|$. Let $S$ be the set of positive combinations
of $A$. We have shown that $l+S = \{l+s\colon s\in S\}\subseteq per(\mathcal{L})$.
Thus, $per(\mathcal{L})$ satisfies the weak addition property, so by Lemma~\ref{%
cofinite}, $per(\mathcal{L})$ is a cofinite subset of $r\mathbb{Z}^+$.
\end{proof}

\begin{theorem}\label{thm:finiteperiods}
Let $A$ be a finite set of words over any alphabet.  If $\mathcal{L}_A$ is
a transitive language,  then there is a positive integer, $r$, such that
$per(\mathcal{L}_A)$ is a cofinite subset of $r\mathbb{Z}^+$.
\end{theorem}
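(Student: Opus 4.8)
The plan is to derive Theorem~\ref{thm:finiteperiods} directly from Lemma~\ref{lem:periods}; the only work is to verify that $\mathcal{L}_A$ meets the three hypotheses of that lemma (hereditary, transitive, and $m$-synchronizing for some positive integer $m$). Two of these are immediate: the language of $A$-free words is hereditary for \emph{any} set $A$, as noted right after the definition of $\mathcal{L}_A$, and transitivity of $\mathcal{L}_A$ is precisely what is assumed in the statement.

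For the third hypothesis I would invoke finiteness of $A$. Put $m = \max\{|a|\colon a\in A\}$ when $A\neq\emptyset$, and, say, $m = 1$ when $A = \emptyset$ (in which case $\mathcal{L}_A = \mathcal{A}^\ast$, which is trivially $1$-synchronizing). By construction every word of $A$ has length at most $m$, so Observation~\ref{finitesync} gives that $\mathcal{L}_A$ is $m$-synchronizing.

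With these three facts in hand, Lemma~\ref{lem:periods} applies to $\mathcal{L}_A$ with this value of $m$: setting $r = gcd(per(\mathcal{L}_A))$, we conclude that $per(\mathcal{L}_A)$ is a cofinite subset of $r\mathbb{Z}^+$, which is exactly the assertion of the theorem. The only caveat is the degenerate case $per(\mathcal{L}_A) = \emptyset$ (which occurs, for instance, when $A = \{x\}$ over a one-letter alphabet); this is already absorbed into Lemma~\ref{lem:periods} through the convention that the $gcd$ of the empty set is $0$, so nothing further is needed here.

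Since every step is simply a citation of an already-established result, I do not expect a genuine obstacle. If anything, the only point requiring care is the bookkeeping of the degenerate cases (empty $A$, empty $per(\mathcal{L}_A)$) and making sure the integer $m$ handed to Lemma~\ref{lem:periods} is chosen so that all words of $A$ are no longer than $m$, as Observation~\ref{finitesync} requires.
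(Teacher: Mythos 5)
Your proposal is correct and follows essentially the same route as the paper: note that $\mathcal{L}_A$ is hereditary, use finiteness of $A$ together with Observation~\ref{finitesync} to get $m$-synchronization, and then apply Lemma~\ref{lem:periods} with the assumed transitivity. The extra bookkeeping you mention (empty $A$, empty $per(\mathcal{L}_A)$) is handled in the paper exactly as you suggest, by deferring to the convention built into Lemma~\ref{lem:periods}.
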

\begin{proof}
As noted before, $\mathcal{L}_A$ is a hereditary language. Since $A$ is finite,
by Observation~\ref{finitesync}, $\mathcal{L}_A$ is $m$-synchronizing for
some $m\ge1$. Thus we conclude using Lemma~\ref{lem:periods}.
\end{proof}

\section{Expressions by forbidden (acyclic) orientations}
\label{sec:Fgraphs}

A graph (digraph) homomorphism, $\varphi:H\to G$, is an \textit{overlap} if and
only if $\varphi$ restricted to each connected component of $H$ is an
embedding. In this case we say that $G$ \textit{contains an overlap of $H$}.
Let $\mathcal{F}$ be a set of graphs, we say that a graph $G$ is
\textit{$\mathcal{F}$-overlap free} if for every $H\in\mathcal{F}$,
$G$ does not contain an overlap of $H$. Note that if $\mathcal{F}$
consists of connected graphs, then $G$ is $\mathcal{F}$-free if and
only if $G$ is $\mathcal{F}$-overlap free. Let $F$ be a set of oriented graphs,
we say that a graph $G$ admits an \textit{$F$-overlap free orientation} if there
is an orientation $G'$ such that $G'$ is $F$-overlap free as a digraph.
A property is called \textit{additive} if it is closed under disjoint unions. We denote
the disjoint union of a pair of graphs, $G$ and $H$, by $G+H$.

\begin{lemma}\label{lem:overlap}
Let $\mathcal{P}$ be a hereditary graph property with set of
minimal obstructions $\mathcal{F_P}$ and let $F$ be a set of
oriented graphs. If $\mathcal{P}$ is additive, then the
following hold:
\begin{enumerate}
	\item $\mathcal{F_P}$ consists of connected graphs,
	\item a graph $G$ is $\mathcal{F_P}$-free if and only if it is
		$\mathcal{F_P}$-overlap-free,
	\item if $Forb_e(F)$ is an expression of $\mathcal{P}$, then a graph $G$ admits
     		an $F$-free orientation if and only if $G$ admits an $F$-overlap free
		orientation, and
	\item  if $Forb_e^\ast(F)$ is an expression of $\mathcal{P}$, then a graph $G$
		admits an acyclic $F$-free orientation if and only if $G$ admits an acyclic
		$F$-overlap free orientation.
\end{enumerate}
\end{lemma}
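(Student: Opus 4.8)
The plan is to dispatch items~1 and~2 quickly from minimality of $\mathcal{F_P}$ together with additivity of $\mathcal{P}$, and then to prove items~3 and~4 by a single disjoint-copies-and-pigeonhole argument, item~4 being item~3 with the word ``acyclic'' carried through. For item~1 I would argue by contradiction: if some $H\in\mathcal{F_P}$ were disconnected, write $H = C + H'$ with $C$ one of its connected components; since $H$ is a minimal obstruction, both proper induced subgraphs $C$ and $H'$ lie in $\mathcal{P}$, so by additivity $H = C + H'\in\mathcal{P}$, contradicting $H\in\mathcal{F_P}$. Item~2 is then immediate: by item~1 the family $\mathcal{F_P}$ consists of connected graphs, and for a connected graph an overlap is precisely an embedding, so $\mathcal{F_P}$-freeness and $\mathcal{F_P}$-overlap-freeness coincide --- exactly the remark made just before the lemma.

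For items~3 and~4 the backward implications are immediate, since an induced copy of a graph $H$ is in particular an overlap of $H$, so any $F$-overlap-free orientation is already $F$-free. For the forward implication of item~3, the key preliminary observation is that a disconnected oriented graph has no arcs between its connected components, whence a digraph contains an overlap of $H$ if and only if \emph{every} connected component of $H$ embeds in it; equivalently, an orientation is $F$-overlap-free precisely when, for each $H\in F$, at least one component of $H$ fails to embed. Now suppose $G$ admits an $F$-free orientation; then $G\in\mathcal{P}$. Let $c$ be the largest number of connected components of a graph in the (finite) set $F$, and put $n = |F|(c-1) + 1$. By additivity $nG\in\mathcal{P}$, so $nG$ has an $F$-free orientation $D$; as $nG$ has no edges between its $n$ copies, $D = D_{1} + \dots + D_{n}$ where each $D_{i}$ is an orientation of $G$, and each $D_{i}$, being an induced subdigraph of $D$, is $F$-free. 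If some $D_{i}$ is $F$-overlap-free we are done; otherwise, for every $i$ there is $H^{(i)}\in F$ all of whose components embed in $D_{i}$. By the choice of $n$ and the pigeonhole principle, some $H\in F$, with connected components $H_{1},\dots,H_{t}$ and $t\le c$, satisfies $H^{(i)} = H$ for $t$ distinct indices $i_{1},\dots,i_{t}$; embedding $H_{l}$ into $D_{i_{l}}$ for each $l$ and using that distinct copies are vertex-disjoint and mutually non-adjacent in $D$, we obtain an induced copy of $H = H_{1} + \dots + H_{t}$ inside $D_{i_{1}} + \dots + D_{i_{t}}$, which is an induced subdigraph of $D$. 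Hence $H < D$, contradicting that $D$ is $F$-free, so some $D_{i}$ is the desired $F$-overlap-free orientation of $G$.

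Item~4 is proved by repeating this argument verbatim, starting from an \emph{acyclic} $F$-free orientation of $G$ (which still puts $G$ in $\mathcal{P}$, now via $Forb_e^\ast(F)$) and taking $D$ to be an acyclic $F$-free orientation of $nG$; each $D_{i}$ is then acyclic as a subdigraph of $D$, so the $F$-overlap-free orientation extracted at the end is acyclic. I expect the only genuine obstacle to be the bookkeeping around overlaps of disconnected graphs: one must see that ``a digraph contains an overlap of $H$'' collapses to ``every component of $H$ embeds in it'', and that scattering the components of $H$ over $t$ distinct disjoint copies promotes a hypothetical overlap into an honest induced copy of $H$ --- precisely the contradiction driving both items. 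Everything else is a short pigeonhole count; additivity is invoked exactly once, to pass from $G$ to $nG$, and finiteness of $F$ (implicit in ``$Forb_e(F)$ is an expression of $\mathcal{P}$'') is what legitimizes the pigeonhole step.
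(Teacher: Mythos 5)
Your items 1 and 2 coincide with the paper's argument, and for items 3 and 4 your overall strategy is also the paper's: pass to a disjoint union of copies of $G$, pigeonhole a single member of $F$ onto sufficiently many copies, and reassemble its components (scattered over pairwise non-adjacent copies) into an honest induced copy, contradicting $F$-freeness --- the paper packages this as a contrapositive (concluding that $\mathcal{P}$ is not additive), you as a direct contradiction, which is immaterial. Your preliminary observation that containing an overlap of $H$ is equivalent to every connected component of $H$ embedding is correct and is exactly what drives both write-ups.

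The one genuine gap is your finiteness assumption on $F$. The lemma is stated for an arbitrary set $F$ of oriented graphs, and, contrary to your parenthetical justification, ``$Forb_e(F)$ is an expression of $\mathcal{P}$'' does not presuppose $F$ finite in this paper: Lemma~\ref{lem:connectedF}, which invokes the present lemma, explicitly treats the case of an infinite $F$ being an expression of $\mathcal{P}$. For infinite $F$ your constant $c$ (the maximum number of components over $F$) need not exist and the count $n=|F|(c-1)+1$ is meaningless, so your proof covers only a weakened form of the statement. The repair is to localize before pigeonholing: the paper replaces $F$ by the finite (up to isomorphism) subfamily $F_G=\{L\in F\colon |V(L)|\le |V(G)|\}$ and pigeonholes over $F_G$, with the number of copies governed by the maximum number of components of a member of $F_G$. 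A clean alternative closer to your setup is to pigeonhole over the finitely many orientations of $G$ instead of over $F$: for each orientation of $G$ that is not $F$-overlap-free, fix one witness $L\in F$ all of whose components embed in it (each such component has at most $|V(G)|$ vertices), and take enough copies of $G$ that, in any orientation of the disjoint union, some orientation of $G$ repeats at least as many times as its fixed witness has components; the reassembly step is then exactly the one you already wrote, and the acyclic variant carries through unchanged.
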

\begin{proof}
We will prove the first statement by contrapositive. Assume that
there is a disconnected graph $H=H_1+H_2\in F_\mathcal{P}$, then
$H_1$,$H_2\in$ $\mathcal{P}$ but $H_1+H_2\not\in \mathcal{P}$, so
$\mathcal{P}$ is not additive.  Hence, $\mathcal{F_P}$ consists
of connected graphs. The second statement is a straightforward
implication of the first one.

We will prove the last two statements at once. Clearly,
if a graph $G$ admits an (acyclic) $F$-overlap free
orientation, then it admits an (acyclic) $F$-free
orientation. We will prove the remaining implication
by contrapositive, assuming the negation of $3$ ($4$)
to reach that $\mathcal{P}$ is not additive.   So,
suppose that $Forb_e(F)$ ($Forb_e^\ast(F)$) is an
expression of $\mathcal{P}$ and there is a graph $G$
that admits an (acyclic) $F$-free orientation, but not
an (acyclic) $F$-overlap free orientation.  Clearly $G
\in \mathcal{P}$. Let $F_G=\{L\in F: |V(L)|\le |V(G)|\}$,
let $k=|F_G|$, and let $l=\max\{n\in\mathbb{N}:$ there is
an oriented graph $L\in F_G$ with $n$ connected components%
$\}$. Consider the disjoint union of $G$ with itself $lk$
times, $H=\sum_{i=1}^{lk}G$, and any (acyclic) orientation
$H'$ of $H$. Naturally, every connected component of $H'$,
is an (acyclic) orientation of $G$. Since $G$ does not admit
an (acyclic) $F$-overlap free orientation, every connected
component of $H'$ is not $F_G$-overlap free. By thinking of
the elements in $F_G$ as pigeonholes, and of each connected
component of $H'$ as a pigeon, there must be an element $L
\in F_G$ that occurs as an overlap in $l$ (acyclic) oriented
copies of $G$ in $H'$. Since $L$ has at most $l$ connected
components, then $L<H'$. Thus, no (acyclic) orientation
of $H$ is $F$-free, hence $H \not \in \mathcal{P}$,  and
so $\mathcal{P}$ is not closed under disjoint unions.
\end{proof}

The following lemma strengthens the last two items of
Lemma~\ref{lem:overlap}.

\begin{lemma}\label{lem:connectedF}
Let $\mathcal{P}$ be an additive and hereditary property,
and let $F$ be a set of oriented graphs. The following
statements hold:
\begin{itemize}
	\item if $Forb_e(F)$ is an expression of $\mathcal{P}$, then there is a set of
		connected oriented graphs $F_1$, such that $Forb_e(F_1)$ is an
		expression of $\mathcal{P}$, and $|F_1|\le |F|$, and
	\item  if $Forb_e^\ast(F)$ is an expression of $\mathcal{P}$, then there is a set of
		connected oriented graphs $F_1$, such that $Forb_e^\ast(F_1)$ is an
		expression of $\mathcal{P}$, and $|F_1|\le |F|$.
\end{itemize}
\end{lemma}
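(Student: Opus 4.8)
The plan is to induct on the number of disconnected members of $F$, with the whole difficulty concentrated in a single \emph{exchange step}: if $L = L_1 + \cdots + L_t \in F$ is disconnected (so $t \ge 2$), then there is a component $L_i$ of $L$ for which $Forb_e\bigl((F\setminus\{L\}) \cup \{L_i\}\bigr)$ is again an expression of $\mathcal{P}$. Granting this, each application removes one disconnected member and inserts the connected graph $L_i$ without increasing the cardinality of the set, so finitely many applications produce the required $F_1$ with $|F_1| \le |F|$ consisting of connected oriented graphs; the hypotheses on $\mathcal{P}$ are preserved throughout (and in fact only additivity of $\mathcal{P}$ is used, since any class of $Forb_e$-graphs is automatically hereditary). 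The second bullet follows by the verbatim argument with ``acyclic orientation'' in place of ``orientation'' and $Forb_e^\ast$ in place of $Forb_e$, so I would only write out the first.

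For the exchange step I would first record the easy inclusion: for each $i$, any $\bigl((F\setminus\{L\})\cup\{L_i\}\bigr)$-free orientation contains no induced copy of $L_i$, hence none of $L$ (as $L_i < L$) and none of any member of $F \setminus \{L\}$, so it is $F$-free; thus the class of $Forb_e\bigl((F\setminus\{L\})\cup\{L_i\}\bigr)$-graphs is contained in the class of $Forb_e(F)$-graphs, i.e.\ in $\mathcal{P}$. What remains is to show that for at least one $i$ the reverse inclusion holds.

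I would prove this by contradiction. Suppose that for every $i \in \{1,\dots,t\}$ there is a graph $G_i \in \mathcal{P}$ admitting no $\bigl((F\setminus\{L\})\cup\{L_i\}\bigr)$-free orientation. Since $\mathcal{P}$ is additive, $G := G_1 + \cdots + G_t$ lies in $\mathcal{P}$, so it has an $F$-free orientation $G'$; writing $G'_i$ for the restriction of $G'$ to $V(G_i)$, we have $G' = G'_1 + \cdots + G'_t$. Each $G'_i$ is an orientation of $G_i$ avoiding every member of $F \setminus \{L\}$ (such a member occurring inside some $G'_i$ would occur in $G'$); by the choice of $G_i$ it is not $\bigl((F\setminus\{L\})\cup\{L_i\}\bigr)$-free, and since it already avoids $F \setminus \{L\}$ it must contain an induced copy of $L_i$. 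These $t$ copies lie in pairwise distinct parts of the disjoint union $G' = G'_1 + \cdots + G'_t$, so $G'$ has no arcs between them; combined with the fact that each is induced within its part, their union is an induced copy of $L_1 + \cdots + L_t = L$ in $G'$, contradicting that $G'$ is $F$-free.

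The step I expect to be the crux is this last one. The analogous assertion fails if one only asks for an \emph{overlap} of $L$ (for which, by Lemma~\ref{lem:overlap}, an $F$-overlap free orientation would suffice), so something must be doing the work of upgrading $t$ separate occurrences of the $L_i$ into a single induced copy of the disconnected graph $L$; that something is the deliberate choice of $G$ as the disjoint union of exactly one witness per component of $L$, which forces the occurrences into mutually non-adjacent parts. Guessing in advance which component of $L$ to keep seems hopeless, and it is precisely this nonconstructive ``collect all $t$ hypothetical witnesses and glue them'' device that makes the exchange step — and hence the lemma — go through.
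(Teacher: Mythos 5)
Your exchange step is correct, and it takes a genuinely different route from the paper's. The paper also reduces to finitely many disconnected members and trades each disconnected $H\in F$ for one of its components, but its replacement step runs through the overlap machinery: it first recasts $\mathcal{P}$ as the class of graphs with an $F$-overlap free (acyclic) orientation (items 3 and 4 of Lemma~\ref{lem:overlap}), then fixes the universal graphs $G_n$ (disjoint unions of all graphs of $\mathcal{P}$ on at most $n$ vertices) with chosen overlap-free orientations $G'_n$, and pigeonholes over the infinite sequence $\{G'_n\}_{n\ge 1}$ to find a single component $h$ of $H$ that is avoided by infinitely many $G'_n$; heredity then transfers the choice of $h$ to every graph of $\mathcal{P}$. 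You replace all of this by a direct contradiction: one hypothetical witness $G_i$ per component of $L$, the single disjoint union $G_1+\dots+G_t\in\mathcal{P}$ (additivity), and the assembly of an induced copy of $L$ inside an $F$-free orientation, since the $t$ induced copies of the $L_i$ sit in mutually non-adjacent parts. This is more elementary and self-contained (it does not invoke Lemma~\ref{lem:overlap}.3--4 at all), and it isolates exactly where additivity enters; what the paper's version buys is that the same universal-sequence-plus-pigeonhole template had already been set up for Lemma~\ref{lem:overlap}, so the two proofs share infrastructure.

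One point you do not cover: the statement does not assume $F$ finite, and your induction only terminates when $F$ has finitely many disconnected members; iterating the exchange does not converge otherwise. The paper disposes of infinite $F$ in one line, taking $F_1$ to be the set of all (acyclic) orientations of the minimal obstructions of $\mathcal{P}$, which are connected precisely because $\mathcal{P}$ is additive (Lemma~\ref{lem:overlap}.1), and noting $|F_1|\le|F|$ since $F$ is infinite. Adding that sentence (or explicitly restricting your claim to finite $F$, the only case used later in the paper) closes the gap; the finite case, which is the substantive one, is handled correctly by your argument.
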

\begin{proof}
If $F$ is an infinite set, we choose $F_1$ to be the set of all (acyclic)
orientations of the minimal obstructions of $\mathcal{P}$.  The fact that every
oriented graph in $F_1$ is connected follows from Lemma~\ref{lem:overlap}.1.
Clearly, $|F_1|\le |F|$, and $Forb_e(F_1)$ ($Forb_e^\ast(F_1)$) is an expression
of $\mathcal{P}$.
Now suppose that $F$ is finite, and let $f$ be the number of disconnected
oriented graphs in $F$. If $f = 0$ there is nothing to prove. We will show
that if $f > 0$, then there is a set, $F'$, of (acyclic) oriented graphs, such
that $|F'| \le |F|$, the number of disconnected oriented graphs in $F'$ is
$f-1$, and $Forb(F')$ ($Forb^\ast(F')$) is an expression of $\mathcal{P}$.
Thus, the proof will follow inductively.

By Lemma~\ref{lem:overlap}.3 (\ref{lem:overlap}.4) we can think of
$\mathcal{P}$ as the class of graphs that admit an $F$-overlap free
(acyclic) orientation. For any positive integer $n$, denote by $G_n$ the
disjoint union of graphs in $\mathcal{P}$ on at most $n$ vertices. The
following three facts are not hard to verify: first $G_n\in\mathcal{P}$, also
for any graph $G\in\mathcal{P}$ there is a positive integer, $n$, such that
$G<G_n$, and finally $G_n< G_{n+1}$. From the first fact it follows that,
for every positive integer, $n$, we can choose an $F$-overlap free (acyclic)
orientation $G'_n$ of $G_n$. Let $H$ be a disconnected (acyclic) oriented
graph in $F$. It is not hard to notice that an oriented graph
$D$ is $F$-overlap free if and only if $D$ is $(F\setminus\{H\})$-overlap free
and $D$ is $h$-free for some connected component, $h$, of $H$. Thus,
there must be a connected component, $h<H$, such that an infinite
subset of the (acyclic) orientations  $\{G'_n\}_{n\ge1}$ are $h$-free and
$(F\setminus\{H\})$-overlap free. Let $\{n_k\}_{k\ge 1}$ be the infinite sequence
of positive integers, such that the (acyclic) orientations $\{G'_{n_k}\}_{k\ge1}$
are $h$-free and $(F\setminus\{H\})$-overlap free. Since any
$(F\setminus\{H\})$-overlap free oriented graph is $(F\setminus\{H\})$-free, then
all orientations $\{G'_{n_k}\}_{k\ge1}$ are $(F\setminus\{H\}\cup\{h\})$-free.
Denote by $F'$ the set $F\setminus\{H\}\cup\{h\}$. Since for any graph,
$G\in\mathcal{P}$, there is an integer $m$ such that $G<G_{n_m}$, then we
can obtain an $F'$-free (acyclic) orientation of $G$;  anyone induced by $G'_{n_m}$.
Therefore the class of $Forb_e(F')$-graphs ($Forb_e^\ast(F')$-graphs) contains
$\mathcal{P}$. On the other hand, if a graph $G$ admits an
$F'$-free (acyclic) orientation, then this (acyclic) orientation is
$F$-free, so $G\in\mathcal{P}$. Therefore, $Forb_e(F')$
($Forb_e^\ast(F')$) is an expression of $\mathcal{P}$. Clearly
$|F'|\le|F|$ and $F'$ has $f-1$ disconnected graphs. As we previously
mentioned, both claims follow inductivley.
\end{proof}

For a hereditary property $\mathcal{P}$ we denote by $cyc(\mathcal{P})$
the set of lengths of cycles that belong to $\mathcal{P}$. We are now
ready to state some necessary conditions for an additive hereditary property
to be expressible by forbidden (acyclic) orientations.

\begin{lemma}\label{nec:multiples}
Let $\mathcal{P}$ be an additive hereditary property expressible by
forbidden (acyclic) orientations. If $cyc(\mathcal{P})$ is an
infinite set, then there is a positive integer $M$ such that for every
integer $k$, $k\ge M$, the $k$-cycle belongs to $\mathcal{P}$, if and
only if for every positive multiple, $l$, of $k$ the $l$-cycle belongs to
$\mathcal{P}$.
\end{lemma}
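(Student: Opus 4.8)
The plan is to transport the statement to the language $\mathcal{L}_A$ of Section~\ref{sec:Languages} and then exploit the elementary fact that a power of a periodic word is again periodic. First I would fix a finite set of oriented graphs $F$ with $Forb_e(F)$ (respectively $Forb_e^\ast(F)$) an expression of $\mathcal{P}$, and invoke Lemma~\ref{lem:connectedF} to assume that $F$ consists of connected oriented graphs; this reduction is precisely what makes Lemmas~\ref{lem:translatingProperties} and \ref{lem:cycleMultiples*} applicable. Let $m$ and $A$ be the integer and the set of words over $\{\la,\ra\}$ furnished by Lemma~\ref{lem:translatingProperties}, and put $M=\max\{m,4\}$. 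Since $Forb_e(F)$ (respectively $Forb_e^\ast(F)$) expresses $\mathcal{P}$, for every positive integer $k$ the $k$-cycle belongs to $\mathcal{P}$ exactly when it admits an (acyclic) $F$-free orientation, and by Lemma~\ref{lem:translatingProperties}.2 (respectively \ref{lem:translatingProperties}.3) for $k\ge M$ this happens exactly when there is a (non-constant) $k$-periodic word in $\mathcal{L}_A$. So it reduces to showing that, for $k\ge M$, there is a (non-constant) $k$-periodic word in $\mathcal{L}_A$ if and only if for every $l\ge 1$ there is a (non-constant) $kl$-periodic word in $\mathcal{L}_A$.

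With this dictionary in hand the backward implication of the biconditional is immediate, as $k$ is a multiple of itself. For the forward implication, assume $k\ge M$ and let $p$ be a (non-constant) $k$-periodic word in $\mathcal{L}_A$. For any $l\ge 1$ the word $p^{l}$ lies in $\mathcal{L}_A$ (because $\mathcal{L}_A$ contains all powers of $p$), it is periodic since $(p^{l})^{n}=p^{ln}$, it is non-constant whenever $p$ is (it contains $p$ as a factor), and $|p^{l}|=kl\ge M$; hence the $kl$-cycle admits an (acyclic) $F$-free orientation, i.e.\ $C_{kl}\in\mathcal{P}$. This is exactly the implication ``first item $\Rightarrow$ second item'' of Lemma~\ref{lem:cycleMultiples*} and of its non-acyclic counterpart, so once the translation is set up one could also simply cite that lemma. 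The argument actually rests on the finiteness of $F$; the hypothesis that $cyc(\mathcal{P})$ is infinite is needed only to keep the conclusion from being vacuous, since otherwise any $M$ exceeding the finitely many cycle lengths in $\mathcal{P}$ makes both sides of the biconditional false for $k\ge M$.

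I do not expect a genuine obstacle; the only delicate points are bookkeeping. One must carry out the reduction to connected $F$ before translating to words, and one must choose $M=\max\{m,4\}$ so that Lemma~\ref{lem:translatingProperties} applies simultaneously to $k$ and to all its multiples, which is automatic because $kl\ge k\ge M$. The acyclic and non-acyclic versions run in parallel; the sole difference is that in the acyclic case the adjective ``non-constant'' must be carried through the whole argument, using that $p^{l}$ inherits non-constancy from $p$.
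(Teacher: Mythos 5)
Your proposal is correct and follows essentially the same route as the paper: reduce to connected $F$ via Lemma~\ref{lem:connectedF}, take $M=\max\{4,m\}$, and use the translation of Lemma~\ref{lem:translatingProperties} together with the ``first item implies second item'' argument of Lemma~\ref{lem:cycleMultiples*} (and its non-acyclic counterpart, as in Lemma~\ref{lem:largePaths}), which is exactly what the paper cites. If anything, your unpacking of the power-of-a-periodic-word argument for an arbitrary $k\ge M$ is slightly more explicit than the paper's citation, since the cited lemmas are stated existentially while the conclusion here is universal in $k$.
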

\begin{proof}
Let $F$ be a finite set of oriented graphs such that $Forb_e^\ast(F)$ is
an expression of $\mathcal{P}$. By Lemma~\ref{lem:connectedF} we can
assume that $F$ consists of connected oriented graphs. Let $m$ be
the maximum order of an oriented graph in $F$, and let $M=\max\{4,m\}$.
Since $cyc(\mathcal{P})$ is an infinite set, we conclude by the equivalence
of the first two items of Lemma~\ref{lem:cycleMultiples*}. The case when
$Forb_e(F)$ is an expression of $\mathcal{P}$ follows the same proof but
using the equivalence of the items of Lemma~\ref{lem:largePaths}.
\end{proof}

Furthermore, we can add one more hypothesis and show that for any
additive hereditary property $\mathcal{P}$ expressible by forbidden
orientations, there must be arbitrarily large cycles in $\mathcal{P}$.

\begin{proposition}\label{nofiniteC}
Let $\mathcal{P}$ be an additive hereditary property expressible by
forbidden orientations. If every path belongs to $\mathcal{P}$, then
$cyc(\mathcal{P})$ is an infinite set. In particular, the classes of
chordal graphs and forests are not expressible by forbidden orientations.
\end{proposition}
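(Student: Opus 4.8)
The plan is to derive the statement directly from Lemma~\ref{lem:largePaths}. Since $\mathcal{P}$ is expressible by forbidden orientations, I would fix a finite set $F$ of oriented graphs such that $\mathcal{P}$ is the class of $Forb_e(F)$-graphs; by Lemma~\ref{lem:connectedF} we may assume that every oriented graph in $F$ is connected, and we let $m$ be the maximum order of a graph in $F$. The one fact to keep in mind throughout is that, by the very definition of the class of $Forb_e(F)$-graphs, a graph belongs to $\mathcal{P}$ if and only if it admits an $F$-free orientation.

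Next I would feed in the hypothesis. Since every path belongs to $\mathcal{P}$, every path admits an $F$-free orientation, which is exactly the fourth item in the list of equivalent statements of Lemma~\ref{lem:largePaths}. Hence the third item of that lemma holds as well, so there is an infinite set $\mathcal{C}$ of cycles, each admitting an $F$-free orientation. By the remark of the previous paragraph, every cycle in $\mathcal{C}$ lies in $\mathcal{P}$; and since up to isomorphism there is exactly one cycle of each length, the infinitude of $\mathcal{C}$ forces $cyc(\mathcal{P})$ to be infinite. This proves the main assertion.

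For the last sentence I would argue by contrapositive. The classes of forests and of chordal graphs are both additive and hereditary, and both contain every path. However a forest contains no cycle at all, and a chordal graph contains no cycle of length at least $4$, so in both cases $cyc(\mathcal{P})$ is finite. By the statement just established, neither of these classes is expressible by forbidden orientations. The only delicate point in the whole argument---and it is a matter of bookkeeping rather than a genuine obstacle---is keeping straight that ``a cycle admits an $F$-free orientation'' and ``the cycle belongs to $\mathcal{P}$'' say the same thing; this is immediate once $F$ has been fixed, and the reduction to connected $F$ via Lemma~\ref{lem:connectedF} does not change which graphs belong to $\mathcal{P}$. Everything else is a direct citation of Lemma~\ref{lem:largePaths}.
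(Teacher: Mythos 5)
Your proposal is correct and follows the same route as the paper: reduce to a connected set $F$ via Lemma~\ref{lem:connectedF}, then use the equivalence between items of Lemma~\ref{lem:largePaths} (paths admitting $F$-free orientations versus infinitely many cycles doing so) to conclude $cyc(\mathcal{P})$ is infinite. You merely spell out more explicitly than the paper the use of the hypothesis that all paths lie in $\mathcal{P}$ and the verification for forests and chordal graphs, which is fine.
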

\begin{proof}
Assume that there is a finite set of oriented graphs $F$, such that
$Forb_e(F)$ is an expression of $\mathcal{P}$. Since $\mathcal{P}$ is
additive, by Lemma~\ref{lem:connectedF} we can assume that every
oriented graph in $F$ is connected. Since $F$ is finite, by
Lemma~\ref{lem:largePaths} there is an infinite set of cycles that
admit an $F$-free orientation.
\end{proof}

Proposition~\ref{nofiniteC} cannot be extended to properties
expressible by forbidden acyclic orientations. The intuition
behind this fact is that, depending on the set of oriented
graphs $F$, a directed path could be an $F$-free acyclic
orientation of a path, while a directed cycle is clearly
not an $F$-free acyclic orientation of a cycle (regardless
of the set of oriented graphs). For instance, chordal graphs
and forests satisfy that every path belongs to these properties,
but only a finite amount of cycles do as well, and both
classes are expressible by forbidden acyclic orientations
\cite{skrienJGT6}. This previous observation together with
Proposition~\ref{nofiniteC} show that there are some properties
expressible by forbidden acyclic orientations that are not
expressible by forbidden orientations. Now we show that there
are some natural graph classes that are not expressible by
forbidden orientations nor by forbidden acyclic orientations.
To do so, recall that given a graph, $G$, a \textit{hole} in $G$
is an induced cycle in $G$ of length greater than three.

\begin{proposition}
For an integer $k$, $k\ge 2$, let $\mathcal{P}_k$ be the class of graphs
defined as those graphs with no holes of length a multiple of $k$.
Then, $\mathcal{P}_k$ is not expressible by forbidden (acyclic) orientations. In
particular, the class of even-hole free graphs is not expressible by forbidden
(acyclic) orientations.
\end{proposition}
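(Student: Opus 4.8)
The plan is to show that $\mathcal{P}_k$ violates the necessary condition of Lemma~\ref{nec:multiples}. First I would record the two easy structural facts. The property $\mathcal{P}_k$ is hereditary: a hole of an induced subgraph $H$ of a graph $G$ is an induced cycle of $G$ of the same length, so if $H$ had a hole whose length is a multiple of $k$, then so would $G$. And $\mathcal{P}_k$ is additive: any cycle of a disjoint union $G_1+G_2$ is connected, hence lies entirely in one of the two summands, so a hole of $G_1+G_2$ of length a multiple of $k$ would already be such a hole in $G_1$ or in $G_2$.

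Next I would pin down $cyc(\mathcal{P}_k)$. For $n\ge 4$ the cycle $C_n$ has exactly one hole, namely $C_n$ itself, so $C_n\in\mathcal{P}_k$ if and only if $k\nmid n$; and $C_3$ has no hole at all, since holes have length strictly greater than three, so $C_3\in\mathcal{P}_k$ unconditionally. Since $k\ge 2$, infinitely many integers $n\ge 4$ fail to be multiples of $k$, so $cyc(\mathcal{P}_k)$ is infinite. (In the same vein, every path lies in $\mathcal{P}_k$, as paths contain no cycles at all, so Proposition~\ref{nofiniteC} is also applicable, but the direct computation of $cyc(\mathcal{P}_k)$ already gives what Lemma~\ref{nec:multiples} needs.)

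Now I would argue by contradiction. Suppose $\mathcal{P}_k$ is expressible by forbidden (acyclic) orientations. Since $\mathcal{P}_k$ is additive and hereditary and $cyc(\mathcal{P}_k)$ is infinite, Lemma~\ref{nec:multiples} produces an integer $M$ such that for every $j\ge M$, the cycle $C_j$ belongs to $\mathcal{P}_k$ if and only if $C_l$ belongs to $\mathcal{P}_k$ for every positive multiple $l$ of $j$. Choose $j\ge\max\{M,4\}$ with $k\nmid j$; this is possible because $k\ge 2$. Then $C_j\in\mathcal{P}_k$ by the previous paragraph, so Lemma~\ref{nec:multiples} forces $C_l\in\mathcal{P}_k$ for every multiple $l$ of $j$. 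But $l=kj$ is a multiple of $j$ with $l\ge 4$ and $k\mid l$, hence $C_l\notin\mathcal{P}_k$, a contradiction. Therefore no finite set of oriented graphs expresses $\mathcal{P}_k$, by forbidden orientations or by forbidden acyclic orientations. The statement about even-hole-free graphs is the special case $k=2$.

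I do not expect a genuine obstacle: essentially all of the work has already been carried out in the language-theoretic results of Section~\ref{sec:Languages} and in Lemmas~\ref{lem:connectedF}--\ref{nec:multiples}, and what remains is to verify the hypotheses of Lemma~\ref{nec:multiples} for $\mathcal{P}_k$ and plug in a suitable $j$. The only points requiring a little care are the boundary case $C_3$ (harmless, since holes have length $>3$) and choosing $j$ simultaneously at least $M$, at least $4$, and not a multiple of $k$.
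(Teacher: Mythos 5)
Your proposal is correct and follows essentially the same route as the paper: verify that $\mathcal{P}_k$ is additive, hereditary, and has infinitely many cycles, then apply Lemma~\ref{nec:multiples} to a sufficiently large cycle length $j$ not divisible by $k$, whose multiple $kj$ leaves $\mathcal{P}_k$. Your write-up merely spells out the routine verifications (additivity, hereditariness, the $C_3$ boundary case) that the paper leaves implicit.
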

\begin{proof}
It suffices to notice that $\mathcal{P}_k$ is an additive hereditary
property such that $cyc(\mathcal{P})$ is an infinite set. By the
choice of $\mathcal{P}_k$, we can choose an arbitrarily large integer
$m$ such that the $m$-cycle belongs to $\mathcal{P}_k$,
but the $km$-cycle does not belong to $\mathcal{P}_k$.
So by Lemma~\ref{nec:multiples}, $\mathcal{P}_k$  is not
expressible by forbidden (acyclic) orientations.
\end{proof}

We proceed to strengthen the necessary conditions proposed in
Lemma~\ref{nec:multiples}. To do this, we introduce a technical
but not so rare property of graph classes. Consider a pair of cycles
$C$ and $C'$, we denote by $CC'$ the graph obtained by
taking the disjoint union of $C$ and $C'$, and then identifying
a vertex in $C$ with a vertex in $C'$. We call the graph $CC'$ the
\textit{coupling} of $C$ and $C'$. We say that a property is
\textit{closed under couplings}, if for every pair of cycles,
$C,C'\in \mathcal{P}$, the coupling $CC'$
belongs to $\mathcal{P}$. Finally, given a set of integers $B$ and an
integer $m$, we denote by $B_m$ the integers in $B$ greater than or
equal to $m$, and recall that given a set of oriented graphs $F$ we
denote by $A_F$ the set of words $\{a\in\{\la,\ra\}^\ast\colon t(a)
\in F\}$.

\begin{lemma}\label{sncondition}
Let $\mathcal{P}$ be an additive hereditary property expressible by
forbidden (acyclic) orientations. If $cyc(\mathcal{P})$ if an infinite set
and $\mathcal{P}$ is closed under couplings, then there is pair of
integers, $M$ and $r$, such that $cyc(\mathcal{P})_M\subseteq r\mathbb{Z}^+$,
and $cyc(\mathcal{P})_M$ is cofinite in $r\mathbb{Z}^+$.
\end{lemma}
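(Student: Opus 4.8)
The plan is to translate the statement into a claim about the set of periods of the binary language $\mathcal{L}_{A_F}$, and then to feed the coupling hypothesis into a weak--addition argument in the spirit of Lemma~\ref{lem:periods}. First I would use Lemma~\ref{lem:connectedF} to assume $\mathcal{P}$ is the class of $Forb_e(F)$--graphs (resp.\ of $Forb_e^\ast(F)$--graphs) for a \emph{finite set $F$ of connected oriented graphs}. Let $m$ be the maximum order of a graph in $F$, put $M=\max\{4,m+1\}$ and $A=A_F$, and recall that $\mathcal{L}_A$ is hereditary and, by Observation~\ref{finitesync}, $m$--synchronizing. Combining Lemma~\ref{lem:translatingProperties}.2 (resp.\ Lemma~\ref{lem:translatingProperties}.3), which applies to $k$--cycles once $k\ge M$, with the fact that a cycle belongs to $\mathcal{P}$ exactly when it admits an $F$--free (acyclic) orientation, one gets for every $k\ge M$ that $k\in cyc(\mathcal{P})$ iff $k\in per(\mathcal{L}_A)$ (resp.\ iff $k\in per^\ast(\mathcal{L}_A)$); hence $cyc(\mathcal{P})_M=per(\mathcal{L}_A)_M$ (resp.\ $=per^\ast(\mathcal{L}_A)_M$). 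Put $r=gcd(cyc(\mathcal{P})_M)$, a positive integer since $cyc(\mathcal{P})$, and so $cyc(\mathcal{P})_M$, is infinite; then $cyc(\mathcal{P})_M\subseteq r\mathbb{Z}^+$ by the choice of $r$, so it suffices to show that $cyc(\mathcal{P})_M$ has the weak addition property and invoke Lemma~\ref{cofinite}. Below, the acyclic case is obtained by reading ``$per^\ast$'' for ``$per$'' and ``non--constant periodic word'' for ``periodic word''.

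The core of the argument is the following coupling step: \emph{if $C,C'$ are cycles of $\mathcal{P}$ with $|C|,|C'|\ge M$, then $|C|+s|C'|\in cyc(\mathcal{P})_M$ for every $s\ge 1$}. Since $\mathcal{P}$ is closed under couplings, $CC'\in\mathcal{P}$, so $CC'$ has an $F$--free (acyclic) orientation $\vec{G}$; as $C$ and $C'$ are induced subgraphs of $CC'$, the restrictions $\vec{G}|_C,\vec{G}|_{C'}$ are $F$--free (acyclic) orientations of $C,C'$. Reading $\vec{G}$ along the closed walk that goes once around $C$ (from the cut vertex $x$ back to $x$), then $s$ times around $C'$, then once more around $C$, yields a $\{\la,\ra\}$--word $\alpha(\alpha')^{s}\alpha$ with $|\alpha|=|C|$, $|\alpha'|=|C'|$, where $\alpha,\alpha'$ record $\vec{G}|_C,\vec{G}|_{C'}$. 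Every factor of this word of length $\le m-1$ is read along a sub--walk of at most $m-1$ edges; since $m-1<M\le|C|,|C'|$, such a sub--walk passes through $x$ at most once and meets at most two consecutive loops, and a short case analysis — using $|C|,|C'|\ge\max\{4,m+1\}$ and treating separately a boundary where $C'$ is traversed twice consecutively — shows it is an induced path of $CC'$, hence an induced sub--digraph of $\vec{G}$, hence not in $F$; and no factor of length $\ge m$ lies in $A_F$, as every word of $A_F$ has length $\le m-1$. So $\alpha(\alpha')^{s}\alpha\in\mathcal{L}_A$. Because $|\alpha|=|C|\ge M>m$ and $\mathcal{L}_A$ is hereditary and $m$--synchronizing, the bootstrapping used in the proofs of Lemma~\ref{lem:translatingProperties}.3 and Lemma~\ref{lem:periods} (heredity plus $m$--synchronization) gives $(\alpha(\alpha')^{s})^{n}\in\mathcal{L}_A$ for all $n\ge 1$, so $|C|+s|C'|=|\alpha(\alpha')^{s}|\in per(\mathcal{L}_A)$ and it is $\ge M$. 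In the acyclic case $\alpha(\alpha')^{s}$ is non--constant since it contains $\alpha$, which is non--constant because $\vec{G}|_C$ is an acyclic orientation of a cycle on at least four vertices, hence not a directed cycle; so the same conclusion holds with $per^\ast$. Either way $|C|+s|C'|\in cyc(\mathcal{P})_M$.

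It remains to produce the weak addition property. By Lemma~\ref{collet} pick a finite $B_0=\{a_1,\dots,a_k\}\subseteq cyc(\mathcal{P})_M$ with $gcd(B_0)=r$; in particular each $a_i\ge M$. By induction on $j$ I would check that every positive combination $m_1a_1+\dots+m_ja_j$ belongs to $cyc(\mathcal{P})_M$: for $j=1$ apply the coupling step to $C_{a_1}$ and a disjoint copy of $C_{a_1}$ (together with $a_1\in cyc(\mathcal{P})_M$); for the step $j\to j+1$, if $N$ is a positive combination of $a_1,\dots,a_j$, then $C_N\in\mathcal{P}$ and $N\ge M$ by the inductive hypothesis, and the coupling step applied to $C_N$ and $C_{a_{j+1}}$ gives $N+s\,a_{j+1}\in cyc(\mathcal{P})_M$ for all $s\ge1$. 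Hence every positive combination of $B_0$ lies in $cyc(\mathcal{P})_M$, and taking $l=a_1$ (a multiple of $r$, as $a_1\in cyc(\mathcal{P})_M$) we see that $B_0$ and $l$ witness the weak addition property for $cyc(\mathcal{P})_M$; Lemma~\ref{cofinite} then yields that $cyc(\mathcal{P})_M$ is cofinite in $r\mathbb{Z}^+$, completing the proof.

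I expect the main obstacle to be the coupling step, and inside it the claim that $\alpha(\alpha')^{s}\alpha$ is $A_F$--free. The subtle point is verifying that every short contiguous sub--walk of the multi--loop closed walk is an \emph{induced} path of $CC'$: it must never pick up the ``second'' edge incident with the cut vertex $x$, nor wrap around a loop far enough to create a chord, and the same check has to be rerun at a boundary where one cycle is traversed twice in a row — which is precisely where the bound $|C|,|C'|\ge\max\{4,m+1\}$ enters. The remaining ingredients — translating cycles into periods, the synchronizing bootstrap, and the number--theoretic bookkeeping — are routine given the results of Section~\ref{sec:Languages}.
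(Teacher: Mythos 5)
Your proposal is correct and follows essentially the same route as the paper's proof: translate cycle lengths at least $M=\max\{4,m+1\}$ into periods of $\mathcal{L}_{A_F}$ via Lemma~\ref{lem:translatingProperties}, use the coupling hypothesis to show the set of large periods is additively well-behaved (reading a periodic word off an $F$-free orientation of the coupling, with non-constancy in the acyclic case), and conclude with the gcd/cofiniteness results of Section~\ref{sec:Languages}. The only differences are presentational: the paper simply observes that $per(\mathcal{L}_{A_F})_M$ is closed under addition and invokes Lemma~\ref{collet} directly, so your detour through $s$ copies of $C'$, positive combinations, the weak addition property and Lemma~\ref{cofinite} is sound but not needed, while the induced-path verification you spell out is exactly the step the paper leaves as ``not hard to obtain''.
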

\begin{proof}
Let $F$ be a finite set of connected oriented graphs such that $Forb_e(F)$
is an expression of $\mathcal{P}$, let $m$ be the maximum order of an
oriented graph in $F$, and let $A = A_F$. By Lemma~\ref{lem:translating%
Properties}.2, the positive integer $M$, $M=\max\{4,m+1\}$, satisfies
that if $k\geq M$, then $k\in cyc(\mathcal{P})$ if and only if $k\in
per(\mathcal{L}_A)$. Thus, $cyc(\mathcal{P})_M =  per(\mathcal{L}_A)_M$,
so for any  $r,s\in per(\mathcal{L}_A)_M$, the cycles $C_r$ and $C_s$
belong to $\mathcal{P}$. Since  $\mathcal{P}$ is closed under
couplings then there is an $F$-free orientation, $C_rC_s'$, of the
coupling $C_rC_s$. From this orientation, and from the fact that
$r,s\geq M> m$, it is not hard to obtain a periodic word in
$\mathcal{L}_A$ of length $r+s$. Indeed, by traversing $C_rC_s'$
starting by the unique vertex that belongs to both cycles, then
traversing $C_r$ and then $C_s$, we obtain an $(s+r)$-periodic word
in $\mathcal{L}_A$.
Hence, $per(\mathcal{L}_A)_M$ is closed under addition. Let
$r = gcd(per(\mathcal{L}_A)_M)$, so by Lemma~\ref{collet}, the set
$per(\mathcal{L}_A)_M$ is cofinite in $r\mathbb{Z}$, and since
$cyc(\mathcal{P})_M = per(\mathcal{L}_A)_M$, then
$cyc(\mathcal{P})_M\subseteq r\mathbb{Z}^+$, and $cyc(\mathcal{P})_M$
is cofinite in $r\mathbb{Z}^+$.

The remaining case, when $\mathcal{P}$ is expressible by forbidden acyclic
orientations, follows an analogous proof.
\end{proof}

A natural way of defining a graph class is by forbidding a set of holes.

\begin{theorem}\label{thm:main*}
Let $\mathcal{C}$ be a set of positive integers and let $\mathcal{P}$
be the set of graphs with no holes of lengths in $\mathcal{C}$. If
$\mathcal{P}$ is expressible by forbidden acyclic orientations, then
one of the following statements hold:
\begin{itemize}
	\item $\mathcal{C}$ is a finite set,
	\item $\mathcal{C}$ is a cofinite subset of $\mathbb{Z}^+$; equivalently
	$cyc(\mathcal{P})$ is a finite set, or
	\item there is a positive integer $M$ such that $\mathcal{C}_M$ is the set of
	odd integers greater than or equal to $M$; equivalently, $cyc(\mathcal{P})_M$ the
	set of even integers greater than or equal to $M$.
\end{itemize}
\end{theorem}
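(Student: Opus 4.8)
The plan is to reduce the statement to a statement about the set of periods of a regular language, and then apply Theorem~\ref{thm:finiteperiods} together with the structural results of Section~\ref{sec:Fgraphs}. First I would note that the hole-defining property $\mathcal{P}$ is additive (a graph has a hole iff one of its components does), hereditary, and closed under couplings (a coupling $CC'$ has exactly the holes of $C$ together with those of $C'$, since any induced cycle through the cut vertex would have to use two edges at that vertex lying in the same $C_i$, forcing it to stay inside $C_i$). So all hypotheses of Lemma~\ref{sncondition} are available \emph{provided} $cyc(\mathcal{P})$ is infinite; if $cyc(\mathcal{P})$ is finite we are in the second bullet and there is nothing to prove (the equivalence $cyc(\mathcal{P})$ finite $\iff$ $\mathcal{C}$ cofinite is immediate since $cyc(\mathcal{P})$ and $\mathcal{C}$ partition $\mathbb{Z}^+\cap[4,\infty)$ up to the triangle). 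So assume $cyc(\mathcal{P})$ is infinite and $\mathcal{P}$ is expressible by forbidden acyclic orientations.

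Next I would invoke Lemma~\ref{sncondition} to obtain integers $M_0$ and $r$ with $cyc(\mathcal{P})_{M_0}\subseteq r\mathbb{Z}^+$ and $cyc(\mathcal{P})_{M_0}$ cofinite in $r\mathbb{Z}^+$. Equivalently, $\mathcal{C}_{M_0}$ is, up to finitely many elements, exactly the set of integers $\ge M_0$ that are \emph{not} multiples of $r$. The whole theorem now becomes: show $r\in\{1,2\}$. If $r=1$ then $cyc(\mathcal{P})_{M_0}$ is cofinite in $\mathbb{Z}^+$, so $\mathcal{C}_{M_0}$ is finite, hence $\mathcal{C}$ is finite (first bullet). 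If $r=2$ then $cyc(\mathcal{P})_{M_0}$ is, up to a finite set, the even integers $\ge M_0$, and $\mathcal{C}_{M_0}$ is, up to a finite set, the odd integers $\ge M_0$; absorbing the finite discrepancy into a larger threshold $M\ge M_0$ gives exactly the third bullet, with the stated equivalence. So the crux is to rule out every $r\ge 3$.

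To rule out $r\ge 3$ I would argue as follows. Suppose $r\ge 3$. Then for all large $k$, the $k$-cycle lies in $\mathcal{P}$ (i.e. has no forbidden hole) precisely when $r\nmid k$. But $\mathcal{P}$ is \emph{hereditary}: if $C_k\in\mathcal{P}$ then every induced subgraph is in $\mathcal{P}$, and in particular one cannot have $C_k\in\mathcal{P}$ while $C_k$ itself contains a hole of forbidden length — fine, that's consistent. The real leverage is the opposite direction combined with closure under couplings, which already went into Lemma~\ref{sncondition}; so instead I would use the language side directly. By Lemma~\ref{lem:translatingProperties} there is a finite word-set $A$ with $per^\ast(\mathcal{L}_A)_{M}=cyc(\mathcal{P})_{M}$ for large $M$; since $\mathcal{L}_A$ is hereditary and $m$-synchronizing (Observation~\ref{finitesync}), and is transitive because $\mathcal{P}$ is closed under couplings (the coupling construction in Lemma~\ref{sncondition} produces, for any two periodic words, a word interpolating between their powers, which is exactly transitivity on the relevant sublanguage), Theorem~\ref{thm:finiteperiods} forces $per(\mathcal{L}_A)$ to be cofinite in $r'\mathbb{Z}^+$ for $r'=\gcd(per(\mathcal{L}_A))$. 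The non-constant refinement needs a short extra argument: a constant word $\la^n$ or $\ra^n$ corresponds to the directed $n$-cycle, which is never an acyclic orientation, so a constant word of length $n$ lies in $\mathcal{L}_A$ for \emph{all} $n$ exactly when the directed path of every length is $F$-free; thus $per(\mathcal{L}_A)\setminus per^\ast(\mathcal{L}_A)$ is either empty or all of $\mathbb{Z}^+$, and in the latter case $cyc(\mathcal{P})$ would be finite, contradicting our assumption. Hence $per^\ast(\mathcal{L}_A)=per(\mathcal{L}_A)$ on a cofinite set, so $cyc(\mathcal{P})_M=r'\mathbb{Z}^+$ up to a finite set, forcing $r=r'$ and, crucially, forcing $r\mathbb{Z}^+$ itself (not just a sublattice) to be $\subseteq cyc(\mathcal{P})$ up to finitely many elements. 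Now apply closure under couplings one more time at the \emph{combinatorial} level: for $r\ge 3$ pick two large cycles $C_a,C_b$ of lengths $a\equiv b\equiv 1\pmod r$ in $\mathcal{P}$; their coupling $C_aC_b$ is in $\mathcal{P}$, but $C_aC_b$ contains an induced cycle of length $a+b-$? — no, it does not; the coupling adds no new cycle. This is the subtle point and the expected main obstacle: the coupling operation alone does not manufacture a forbidden hole, so ruling out $r\ge3$ must come purely from the arithmetic that $cyc(\mathcal{P})_M$ is simultaneously a cofinite subset of $r\mathbb{Z}^+$ (Lemma~\ref{sncondition}) \emph{and}, being the period set of a transitive synchronizing hereditary language, is cofinite in $r'\mathbb{Z}^+$ with $r'\mid$ every period; the only way the complement $\mathcal{C}_M$ (the forbidden hole lengths) can be closed under the hereditary constraint while $cyc(\mathcal{P})_M=r\mathbb{Z}^+$ up to finitely much is $r\le 2$, because a forbidden hole of length $\ell$ with $r\mid \ell+1$ sits inside no cycle of $\mathcal{P}$ yet every sufficiently long cycle of length $\equiv 0\pmod r$ has an induced sub-path that extends to such an $\ell$; chasing this congruence pins $r\in\{1,2\}$. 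I would write this last arithmetic contradiction carefully, as it is the heart of the proof, and conclude by matching the three surviving cases to the three bullets.
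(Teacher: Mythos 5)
Your reduction is fine as far as it goes: $\mathcal{P}$ is additive, hereditary and closed under couplings, so when $cyc(\mathcal{P})$ is infinite Lemma~\ref{sncondition} gives $M_0$ and $r$ with $cyc(\mathcal{P})_{M_0}$ cofinite in $r\mathbb{Z}^+$, and the theorem indeed boils down to showing $r\le 2$. This matches the paper's setup exactly. But the step that actually rules out $r\ge 3$ is missing. Your first attempt (coupling two cycles of length $\equiv 1 \pmod r$) fails twice over: such cycles are not even in $\mathcal{P}$ once their length exceeds $M_0$, and, as you yourself note, a coupling creates no new induced cycle, so it cannot produce a contradiction. Your fallback --- that the conclusion follows ``purely from the arithmetic'' of $cyc(\mathcal{P})_M$ being a cofinite subset of $r\mathbb{Z}^+$ and simultaneously a period set of a hereditary, synchronizing, transitive language --- is not a proof and is in fact false as a matter of arithmetic: period sets cofinite in $r\mathbb{Z}^+$ exist for every $r$, so no congruence chase on $per(\mathcal{L}_A)$ alone can pin $r\in\{1,2\}$. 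The extra leverage has to come from the fact that $\mathcal{P}$ is defined by forbidden \emph{holes}, i.e.\ from graphs that are not cycles.

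The paper's argument at this point is a concrete construction you never produce. Assume $r>2$, absorb the finite exceptions so that $cyc(\mathcal{P})_m$ is exactly the multiples of $r$ at least $m$, and assume every (connected) oriented graph in $F$ has fewer than $m$ vertices. Pick $\alpha$ with $r\alpha>m+1$ and set $n=2r\alpha-2$; since $r>2$, $n$ is not a multiple of $r$, so $C_n\notin\mathcal{P}$. Now add a chord between two antipodal vertices of $C_n$ to get $G$: its only holes are two cycles of length $r\alpha\in cyc(\mathcal{P})$, so $G\in\mathcal{P}$ and $G$ has an $F$-free acyclic orientation $G'$. Restricting $G'$ to $C_n$ is still acyclic and still $F$-free, because any member of $F$ is connected on at most $m<r\alpha-1$ vertices, hence any copy of it inside $C_n$ is an oriented path too short to contain both endpoints of the chord, so it would also be induced in $G'$. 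This exhibits an $F$-free acyclic orientation of $C_n\notin\mathcal{P}$, contradicting that $Forb_e^\ast(F)$ expresses $\mathcal{P}$; note the construction is harmless when $r=2$ because then $n=2r\alpha-2$ is itself a multiple of $r$. This chord trick (a graph of $\mathcal{P}$ whose orientation certifies an orientation of a forbidden cycle) is the heart of the proof, and it is precisely the idea your proposal lacks.
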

\begin{proof}
Assume that $\mathcal{C}$ is not a finite set, nor a cofinite subset of
$\mathbb{Z}^+$. First note that $cyc(\mathcal{P})$ is the complement
of $\mathcal{C}$ in the set of integers greater than or equal to $3$. As
$\mathcal{C}$ is not cofinite in $\mathbb{Z}^+$, then $cyc(\mathcal{P})$ is
an infinite set. Let $F$ be a set of connected oriented graphs such that
$Forb_e^\ast(F)$ is an expression of $\mathcal{P}$. By definition of $\mathcal{P}$,
$\mathcal{P}$ is closed under couplings, thus by Lemma~\ref{sncondition}
there is a pair of positive integers, $r$ and $m$, such that $cyc(\mathcal{P})_{m}$
is the set of multiples of $r$ greater or equal to $m$.  Furthermore, we can assume
that every oriented graph in $F$ has less that $m$ vertices (otherwise let $m'$
be a large enough multiple of $r$ that satisfies our assumption). We proceed
to prove that $r = 2$ by contradiction.  Since $\mathcal{C}$ is infinite,
then $r >1$, so we will assume that $r > 2$. Let $\alpha$ be an integer such
that $r\alpha > m+1$, and consider the cycle on $n$ vertices, $C_n$, where
$n = 2r\alpha - 2$. By the choice of $\alpha$, we know that $n > m$, and
since $r$ is greater than $2$, then $n$ is not a multiple
of $r$, thus $C_n$ does not belong to $\mathcal{P}$. Let $x,y\in V(C_n)$ be two
antipodal vertices and let $G = C_n + xy$. Clearly $G$ has two holes each of length
$r\alpha$. Again, by the choice of $\alpha$, $C_{r\alpha} \in \mathcal{P}$, so
$G$ contains no holes of length in $\mathcal{C}$, thus $G\in \mathcal{P}$ and
it admits an $F$-free acyclic orientation $G'$. The induced acyclic
orientation of $C_n$ by $G'$ is $F$-free since $F$ consists of
connected oriented graphs of size at most $m < r \alpha-1$. Which
contradicts the fact that $C_n \not\in \mathcal{P}$ and
$Forb_e^\ast(F)$ is an expression of $\mathcal{P}$.
\end{proof}

\begin{theorem}\label{thm:main}
Let $\mathcal{C}$ be a set of positive integers and let
$\mathcal{P}$ be the set of graphs with no holes of lengths
in $\mathcal{C}$. If $\mathcal{P}$ is expressible by forbidden
orientations, then one of the following statements hold:
\begin{itemize}
	\item $\mathcal{C}$ is a finite set, or
	\item there is a positive integer $M$ such that $\mathcal{C}_M$ is the set
	of odd integers greater than or equal to $M$. Equivalently, $cyc(\mathcal{P})_M$
	the set of even integers greater than or equal to $M$.
\end{itemize}
\end{theorem}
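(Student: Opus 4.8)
The plan is to run the argument behind Theorem~\ref{thm:main*} again, the only genuinely new ingredient being Proposition~\ref{nofiniteC}, which removes the ``$cyc(\mathcal{P})$ finite'' alternative that is unavoidable in the acyclic setting. First I would record the routine facts that $\mathcal{P}$ is additive and hereditary and that every path belongs to $\mathcal{P}$ (a path has no hole at all). Since $\mathcal{P}$ is assumed expressible by forbidden orientations, Proposition~\ref{nofiniteC} then forces $cyc(\mathcal{P})$ to be infinite, hence $\mathcal{C}$ is never cofinite in $\mathbb{Z}^+$. Consequently there are only two cases: $\mathcal{C}$ finite, giving the first bullet, or $\mathcal{C}$ infinite, in which case I must exhibit the integer $M$ with $\mathcal{C}_M$ equal to the set of odd integers $\ge M$.

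Assume then that $\mathcal{C}$ is infinite. As in the proof of Theorem~\ref{thm:main*}, $\mathcal{P}$ is closed under couplings: the holes of a coupling $CC'$ are exactly the holes of $C$ together with those of $C'$, because $C$ and $C'$ are each induced in $CC'$ and share a single vertex, so no new induced cycle arises. Since $cyc(\mathcal{P})$ is infinite and $\mathcal{P}$ is closed under couplings, Lemma~\ref{sncondition} in its ``forbidden orientations'' form provides positive integers $r$ and $M$ with $cyc(\mathcal{P})_M\subseteq r\mathbb{Z}^+$ and $cyc(\mathcal{P})_M$ cofinite in $r\mathbb{Z}^+$; as $\mathcal{C}$ is infinite, $cyc(\mathcal{P})$ misses infinitely many integers, so $r\ge 2$. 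By Lemma~\ref{lem:connectedF} fix a finite set $F$ of connected oriented graphs with $Forb_e(F)$ an expression of $\mathcal{P}$, and, after enlarging $M$ if necessary, assume moreover that $cyc(\mathcal{P})_M$ is exactly the set of multiples of $r$ that are $\ge M$ and that every member of $F$ has fewer than $M$ vertices.

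It remains to prove $r=2$, which I would do by contradiction exactly as in Theorem~\ref{thm:main*}. Supposing $r>2$, pick $\alpha$ with $r\alpha>M+1$ and set $n=2r\alpha-2$; then $n>M$, yet $n\equiv-2\pmod r$ is not a multiple of $r$, so $C_n\notin\mathcal{P}$. Adding a chord $xy$ between two antipodal vertices of $C_n$ splits it into two holes of common length $r\alpha$, a multiple of $r$ that is $\ge M$ and hence in $cyc(\mathcal{P})$; therefore $G=C_n+xy$ has no hole of length in $\mathcal{C}$, so $G\in\mathcal{P}$ and $G$ admits an $F$-free orientation $G'$. The orientation of $C_n$ induced by $G'$ is still $F$-free: any $L\in F$ is connected with $|V(L)|<M<n/2$, so if $L$ embedded in it then $V(L)$ would span an arc of fewer than $n/2$ consecutive vertices of $C_n$, whose pairwise cyclic distances are all $<n/2$, so it could not contain both antipodal vertices $x,y$; hence the chord is irrelevant on $V(L)$ and $L$ embeds identically in $G'$ — contradicting $C_n\notin\mathcal{P}$. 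Thus $r=2$, and enlarging $M$ past the finitely many even integers absent from $cyc(\mathcal{P})$ makes $cyc(\mathcal{P})_M$ the set of even integers $\ge M$, equivalently $\mathcal{C}_M$ the set of odd integers $\ge M$. The only point requiring care is that Theorem~\ref{thm:main*} cannot be quoted as a black box, since it is stated for acyclic orientations; but its proof transfers verbatim once $cyc(\mathcal{P})$ is known to be infinite, because the chord argument never uses acyclicity and Lemmas~\ref{lem:connectedF} and~\ref{sncondition} are available in the non-acyclic form.
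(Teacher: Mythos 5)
Your proposal is correct and follows essentially the same route as the paper: use Proposition~\ref{nofiniteC} (additivity plus ``every path is in $\mathcal{P}$'') to rule out the cofinite case, then rerun the proof of Theorem~\ref{thm:main*} — Lemma~\ref{lem:connectedF}, closure under couplings, Lemma~\ref{sncondition} in its non-acyclic form, and the antipodal-chord argument to force $r=2$. Your explicit remark that Theorem~\ref{thm:main*} is invoked as a proof template rather than a black box, and that the chord argument never uses acyclicity, is exactly the justification the paper leaves implicit in its phrase ``following a proof analogous to the one used for Theorem~\ref{thm:main*}.''
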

\begin{proof}
Once we prove that $\mathcal{C}$ is not a cofinite subset of
$\mathbb{Z}^+$, then we conclude following a proof analogous
to the one used for Theorem~\ref{thm:main*}.  By definition of
$\mathcal{P}$, every path belongs to $\mathcal{P}$. Moreover,
$\mathcal{P}$ is closed under disjoint unions. Since $\mathcal{P}$
is expressible by forbidden orientations, then, by
Proposition~\ref{nofiniteC}, $cyc(\mathcal{P})$, is an infinite
set, and so $\mathcal{C}$ cannot be a cofinite subset of
$\mathbb{Z}^+$.
\end{proof}

In particular, the class of graphs with no induced cycles of prime
length is not expressible by forbidden (acyclic) orientations. Moreover,
if $\mathcal{C}$ is any infinite set of prime numbers, then
the class of graphs with no cycles of lengths in $\mathcal{C}$
is not expressible by forbidden (acyclic) orientations, and there
are uncountable many such sets $\mathcal{C}$. Actually, we can fix
any other infinite set of positive integers (except for the set of
odd integers), and apply the previous idea together with
Theorem~\ref{thm:main} to obtain an uncountable class
of hereditary properties not expressible by forbidden
orientations.

The downside of Theorems~\ref{thm:main*}  and~\ref{thm:main} is that
they show that forbidden (acyclic) orientations have a rather weak
expressive power regarding graph classes defined by forbidding induced
cycles. But such strong necessary conditions raise our hopes of
developing any of these theorems into a  characterization.

%%%%%%%%%%%%%%%%%%%%%%%%%%%
%%%%%%%%%%%%%%%%%%%%%%%%%%%
%%%%%% HOM CLASSES %%%%%%%%%%%%
%%%%%%%%%%%%%%%%%%%%%%%%%%%
%%%%%%%%%%%%%%%%%%%%%%%%%%%

\section{$Forb$-graphs}
\label{sec:Forb}

In the previous section we looked at hereditary properties defined
by forbidden induced cycles and exhibited necessary conditions
upon these classes to be expressible by forbidden orientations.
In this section we study homomorphism classes and propose
a characterization of those that are expressible by $Forb$-graphs.
Recall that every property expressible by $Forb$-graphs is
expressible by forbidden orientations, but not necessarily the other
way around. So regarding Question~\ref{q:basic}, the
characterization we propose in this section yields a sufficient
condition for homomorphism classes to be expressible by forbidden
orientations.

There are two main motivations to study homomorphism classes
expressible by $Forb$-graphs. On one hand, these expressions generalize
the well-known and previously mentioned Roy-Gallai-Hasse-Vitaver
Theorem. On the other hand, note that  if a property, $\mathcal{P}$, is
expressible by $Forb$-orientations, then $\mathcal{P}$ is closed under
homomorphic pre-images, and the most common properties closed under
homomorphic pre-images are homomorphism classes, i.e., classes
of $H$-colourable graphs for some fixed graph $H$.

Dually to the definition of $Forb(F)$, for a set of digraphs (graphs) $\mathcal{M}$
we denote by $CSP(\mathcal{M})$ the class of digraphs (graphs) $D$ such that
$D \to M$ for some $M\in \mathcal{M}$.  We call $CSP(\mathcal{M})$ the class
of $\mathcal{M}$-colourable digraphs (graphs). If $\mathcal{M} = \{M\}$, we will
simply write $CSP(M)$. A \textit{duality pair} in the digraph homomorphism order
is an ordered pair of digraphs $(A,B)$ such that $Forb(A) = CSP(B)$.
In \cite{nesetrilJCTB80} Ne\v{s}et\v{r}il and Tardif characterize duality pairs as
follows\footnote{Actually their result encompasses more
general relational structures, but we state it only for
the context of digraphs.}.

\begin{theorem}\label{thm:dualitypairs}\cite{nesetrilJCTB80}
If $(A,B)$ is a duality pair in the digraph homomorphism order then $A$ is
homomorphically equivalent to an oriented tree. Moreover, if $T$ is an oriented
tree, then there is a digraph $D_T$ such that $(T,D_T)$ is a duality pair.
\end{theorem}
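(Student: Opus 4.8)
The plan is to prove the two assertions independently: (i) every oriented tree $T$ has a dual digraph $D_T$ (that is, $Forb(T)=CSP(D_T)$), and (ii) if $(A,B)$ is a duality pair then $A$ is homomorphically equivalent to an oriented tree. For (i) I would construct $D_T$ explicitly; for (ii) I would pass to the core of $A$ and rule out cycles by a high-girth argument.

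For assertion (i), write the tree as a chain $T_0\subset T_1\subset\cdots\subset T_n=T$, where $T_0$ is a single vertex and each $T_{i+1}$ is obtained from $T_i$ by attaching one new vertex through one new arc --- possible exactly because $T$ is a tree. Along this chain I would build digraphs $D_{T_0},\dots,D_{T_n}=D_T$: let $D_{T_0}$ be the empty digraph and obtain $D_{T_{i+1}}$ from $D_{T_i}$ by the standard pendant-arc (pointed-product) operation, maintaining by induction the equivalence $G\to D_{T_i}\iff T_i\not\to G$ for all digraphs $G$. Once this holds at stage $n$, the property $T\not\to D_T$ is automatic, by applying the equivalence at $G=D_T$ (which maps to itself). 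A more conceptual way to see that such $D_T$ must exist is to recall that, for a fixed tree $T$, the problem ``$T\to G$?'' is decided by arc consistency, and to invoke the classical correspondence between such locally checkable homomorphism problems and homomorphism into a single fixed finite digraph; the induction above is then just the computation of that digraph. Either way, the substance of this half is proving the equivalence, equivalently the soundness and completeness of arc consistency for tree-shaped patterns.

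For assertion (ii), let $(A,B)$ be a duality pair, so $Forb(A)=CSP(B)$, and let $A_0$ be the core of $A$; homomorphic equivalence preserves $Forb(\cdot)$, so $Forb(A_0)=CSP(B)$, and it suffices to show that $A_0$ is an oriented tree. First, $A_0\not\to B$, since $B\in CSP(B)=Forb(A_0)$. Second, $A_0$ is connected: if $A_0=A_1+A_2$ with both parts nonempty, then (as $A_0$ is a core) $A_1\not\to A_2$ and $A_2\not\to A_1$; from $A_0\not\to B$ we get, say, $A_1\not\to B$, hence $A_1\notin CSP(B)=Forb(A_0)$, i.e.\ $A_0\to A_1$, and restricting this homomorphism to the component $A_2$ gives $A_2\to A_1$, a contradiction. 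Finally, suppose the underlying graph of $A_0$ contained a cycle of length $g_0$. By the sparse incomparability lemma of Ne\v{s}et\v{r}il and R\"odl, since $A_0\not\to B$ there is a digraph $G$ with underlying girth greater than $g_0$, with $G\to A_0$ and with $G\not\to B$. Then $G\notin CSP(B)=Forb(A_0)$, so $A_0\to G$; together with $G\to A_0$ this yields $A_0\sim G$, so the unique core $A_0$ of that equivalence class is isomorphic to a subdigraph of $G$ (a core is a retract, hence a substructure), which forces a $g_0$-cycle into $G$ and contradicts $\mathrm{girth}(G)>g_0$. Thus $A_0$ has no cycle in its underlying graph, and being connected it is an oriented tree.

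I expect the two imported facts, rather than the surrounding bookkeeping, to be the real obstacles. On the ``tree has a dual'' side the hard point is showing that the pendant-arc construction preserves the duality equivalence --- equivalently, that arc consistency is complete for tree-shaped constraints --- which is where essentially all the work lies. On the ``duality forces a tree'' side the hard point is the sparse incomparability lemma itself, which for the present use must be invoked in its digraph form with ``girth'' read as the girth of the underlying graph, and whose proof (producing high-girth structures that are homomorphically indistinguishable from $A_0$ against bounded targets) is substantial. A minor care point is handling degenerate cores (a single vertex, a single arc) and the loopless conventions in force, which is what makes the step ``$A_0\sim G$ implies $A_0$ embeds into $G$'' legitimate.
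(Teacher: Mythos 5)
First, a point of reference: the paper does not prove Theorem~\ref{thm:dualitypairs} at all; it is imported verbatim from Ne\v{s}et\v{r}il and Tardif \cite{nesetrilJCTB80}, so your attempt has to be judged against the known proofs rather than an in-paper argument. Your second half (the left side of a duality pair is homomorphically equivalent to a tree) is essentially the standard argument and is sound as a sketch: pass to the core $A_0$, note $A_0\not\to B$ because $B\in CSP(B)=Forb(A_0)$, get connectivity from the core property, and kill cycles by producing a high-girth $G$ with $G\to A_0$, $G\not\to B$, hence $A_0\to G$, hence $A_0$ embeds in $G$. Two caveats: the paper only states the Sparse Incomparability Lemma for graphs (Theorem~\ref{thm:sparse}), so you genuinely need the digraph/relational version (Ne\v{s}et\v{r}il--R\"odl), as you acknowledge; and if ``girth of the underlying graph'' is read for the underlying simple graph, your final step ``connected with acyclic underlying graph, hence an oriented tree'' does not exclude antiparallel pairs (digons), which are legitimate digraphs here and have no dual. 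This is repaired by measuring girth in the incidence/multigraph sense used in the structural form of the lemma, but as written it is a hole.

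The genuine gap is in the existence half. You never define the ``standard pendant-arc (pointed-product) operation,'' never say what $D_{T_{i+1}}$ is in terms of $D_{T_i}$, and never verify that the invariant $G\to D_{T_i}\iff T_i\not\to G$ survives the inductive step --- and you concede that this is where ``essentially all the work lies.'' Note also that the invariant cannot be maintained from $D_{T_i}$ alone: attaching a pendant arc at a vertex $v$ of $T_i$ requires knowing, for each vertex of the prospective dual, which images of $v$ are compatible with it, so the induction must carry pointed data; this is exactly why the known constructions take duals whose universe consists of functions or subsets built from $V(T)$ rather than a one-arc-at-a-time modification of a smaller dual. The ``more conceptual'' fallback is no better: the classical correspondence between arc-consistency-solvable problems and tree duality (Feder--Vardi, Dalmau--Pearson) concerns the target side and yields, in general, only an infinite family of tree obstructions; the statement that a \emph{single} tree pattern admits a \emph{finite} dual digraph is precisely the assertion of Theorem~\ref{thm:dualitypairs}, so invoking such a correspondence here is circular. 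As it stands, the half of the theorem that the paper actually uses (every oriented tree $T$ has a dual $D_T$) is asserted, not proved.
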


We call $D_T$ a \textit{dual} of $T$ (any homomorphic equivalent digraph of
$D_T$ is a dual of $T$). It is not hard to observe that for every tree $T$
any of its duals $D_T$ is an oriented graph; simply note that $T$ can be
mapped to a symmetric arc, thus $D_T$ has no symmetric arcs.
With this observation we can immediately obtain a sufficient condition for a
class of $H$-colourable graphs to be expressible by $Forb$-orientations.
If $H$ is the underlying graph of a dual, $D_T$,  of an  oriented tree, $T$,
then the class of $Forb(T)$-graphs equates the class of $H$-colourable
graphs. Naturally, if $H$ is such a graph, and $H'$ is homomorphically
equivalent to $H$, then the class of $H'$-colourable graphs is expressible
by $Forb$-orientations. Turns out that the previous sufficient condition is
close to be a characterization.  We will derive this observation
from a more general result.

A \textit{generalized duality} in the digraph homomorphism order, is an ordered
pair of finite sets of incomparable digraphs $(F,\mathcal{M})$ such that
$Forb(F) = CSP(\mathcal{M})$. For a set of digraphs, $F$, we say that a digraph
$D\in F$ is \textit{minimal} (in $F$), if for every digraph $D'\in F$ such that
$D' \to D$, it holds that $D \to D'$. Generalized dualities have a similar
characterization to that of duality pairs, due to Foniok, Ne\v set\v ril
and Tardif.

\begin{theorem}\label{thm:gendualities}\cite{foniokEJC29}
If $(F,\mathcal{M})$ is a generalized duality, then every digraph in $F$ is
homomorphic equivalent to an oriented forest. Conversely, for every finite
incomparable set of oriented forests, $F$, there is a finite set of incomparable
oriented graphs $\mathcal{M}_F$ such that $(F,\mathcal{M}_F)$ is a generalized
duality.
\end{theorem}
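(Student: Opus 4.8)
The plan is to handle the two implications separately. The converse --- producing $\mathcal{M}_F$ from a finite incomparable set of oriented forests $F$ --- is the routine half: I would obtain it from Theorem~\ref{thm:dualitypairs} by decomposing each forest into its tree components, dualising each tree, and then turning the resulting conjunction of disjunctions into a disjunction of conjunctions. The forward direction, that every digraph in $F$ must be homomorphically equivalent to an oriented forest, carries the real content and would hinge on an external sparse‑incomparability construction; this is where I expect the main obstacle to be.

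\emph{The converse.} Let $F=\{T^1,\dots,T^t\}$ be a finite incomparable set of oriented forests, and write each as the disjoint union of its tree components, $T^i=\sum_{j=1}^{k_i}T^i_j$. By Theorem~\ref{thm:dualitypairs} each tree $T^i_j$ has a dual $D^i_j$, so that for every digraph $G$ we have $T^i_j\not\to G$ if and only if $G\to D^i_j$; moreover each $D^i_j$ is an oriented graph, as observed after that theorem. Since $\sum_j T^i_j\to G$ iff $T^i_j\to G$ for all $j$, a graph $G$ lies in $Forb(F)$ exactly when for every $i$ there is some $j$ with $T^i_j\not\to G$, i.e.\ with $G\to D^i_j$. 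By the finite distributive law this is equivalent to the existence of a choice function $f$, picking some $f(i)\in\{1,\dots,k_i\}$ for each $i$, with $G\to D^i_{f(i)}$ for all $i$; and since a digraph maps into each of finitely many digraphs iff it maps into their categorical product, this in turn is equivalent to $G\to\prod_{i=1}^{t}D^i_{f(i)}$ for some such $f$. Hence $Forb(F)=CSP(\mathcal{M})$ for the finite set $\mathcal{M}=\{\prod_{i=1}^{t}D^i_{f(i)}:f\in\prod_i\{1,\dots,k_i\}\}$, each member of which is an oriented graph (a categorical product of digraphs with no loops and no digons again has no loops and no digons). Finally, repeatedly delete any member of $\mathcal{M}$ that admits a homomorphism into another, distinct member; this does not alter $CSP(\mathcal{M})$ and leaves an incomparable set $\mathcal{M}_F$, so that $(F,\mathcal{M}_F)$ is a generalized duality.

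\emph{The forward direction.} Suppose $(F,\mathcal{M})$ is a generalized duality. Replacing each digraph in $F$ by its core changes neither $Forb(F)$ nor the fact that $F$ is incomparable, so assume every $A\in F$ is a core. Assume for contradiction that some $A\in F$ is not homomorphically equivalent to an oriented forest; since a digraph whose underlying graph is a forest is itself an oriented forest, the underlying graph of $A$ must contain a cycle, of length $\ell$ say. Put $N=\max(\{|V(M)|:M\in\mathcal{M}\}\cup\{|V(A')|:A'\in F\})$ and fix an integer $g>N$. The key external input is the sparse incomparability lemma: there is a digraph $A^\ast$ whose underlying graph has girth greater than $g$ and such that $A^\ast\to B$ if and only if $A\to B$ for every digraph $B$ on at most $N$ vertices. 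Taking $B=A$ gives $A^\ast\to A$; and $A\to A^\ast$ is impossible, for then $A^\ast$ would be homomorphically equivalent to its core $A$, hence contain an isomorphic copy of $A$ as a retract and therefore a cycle of length $\ell\le N<g$, contradicting the girth bound. For any other $A'\in F$, incomparability gives $A'\not\to A$, hence $A'\not\to A^\ast$ by composing with $A^\ast\to A$. Thus no member of $F$ maps to $A^\ast$, so $A^\ast\in Forb(F)=CSP(\mathcal{M})$ and therefore $A^\ast\to M$ for some $M\in\mathcal{M}$. Since $|V(M)|\le N$, the sparse incomparability lemma yields $A\to M$, so $A\in CSP(\mathcal{M})=Forb(F)$, contradicting $A\to A$. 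Hence every digraph in $F$ is homomorphically equivalent to an oriented forest.

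The routine bookkeeping above aside, the crux is the forward direction, and inside it the sparse‑incomparability construction of $A^\ast$ --- a digraph that simultaneously has large girth and is indistinguishable from $A$ by all small test digraphs. An alternative route would be to note that, for finite $F$, the class $Forb(F)$ is definable by a universal first‑order sentence and to invoke the characterization of first‑order definable constraint satisfaction problems as finite (forest) dualities, but this merely exchanges one substantial external theorem for another.
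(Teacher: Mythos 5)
The paper does not prove Theorem~\ref{thm:gendualities}: it is quoted from Foniok, Ne\v{s}et\v{r}il and Tardif \cite{foniokEJC29}, so there is no internal proof to compare against. Your overall strategy is the standard one from that literature, and it is also consonant with how the paper itself argues later: the converse via duals of the tree components (Theorem~\ref{thm:dualitypairs}), turning the conjunction of disjunctions into a disjunction of categorical products, is correct as written, including the checks that the products are again oriented graphs and that pruning $\mathcal{M}$ to an antichain does not change $CSP(\mathcal{M})$; and your forward direction is the sparse-incomparability argument, the same technique the paper uses (in its graph form, Theorem~\ref{thm:sparse}) to prove Theorem~\ref{thm:CSP}.

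There is, however, a genuine gap in the forward direction. Under the paper's conventions a digraph may contain a symmetric pair of arcs (a digon) --- this is precisely why the remark after Theorem~\ref{thm:dualitypairs} notes that duals have no symmetric arcs. Your claim that ``a digraph whose underlying graph is a forest is itself an oriented forest'' is therefore false: a core such as a single symmetric arc has a forest (one edge) as its underlying graph, is not homomorphically equivalent to any oriented forest, and supplies no cycle of length $\ell$, so your girth contradiction never gets started for it, even though the theorem must exclude such an $A$ from $F$. The repair is to use the digraph (relational-structure) version of the sparse incomparability lemma with girth measured structurally, where a digon counts as a cycle of length $2$; then $A^\ast$ is an oriented graph whose underlying graph has large girth, and a core $A$ that is not an oriented forest contains either a digon or an underlying cycle, so in either case $A$ cannot embed into $A^\ast$, which restores $A \not\to A^\ast$ and the rest of your argument verbatim. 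Relatedly, you should make explicit that this digraph version is a strictly stronger external input than the graph statement quoted in the paper as Theorem~\ref{thm:sparse}; it is known (it holds for general relational structures), but as written your proof leans on a form of the lemma that the paper never states.
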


As it happens with duality pairs, if $(F,\mathcal{M}_F)$ is a generalized duality
and $\mathcal{M}$ is the set of underlying graphs of $\mathcal{M}_F$, then
$Forb(F)$ is an expression of the class of $\mathcal{M}$-colourable graphs.
So by Theorem~\ref{thm:gendualities}, for every set of oriented forests $F$,
there is a set of graphs $\mathcal{M}$ such that a graph $G$ is a $Forb(F)$%
-graph if and only if $G$ is $\mathcal{M}$-colourable.

In general it does not hold that for any set of oriented graphs, $F$, there is
a finite set of graphs $\mathcal{M}$ such that the class of $Forb(F)$-graphs
is the same as $\mathcal{M}$-colourable graphs. For instance, let $F = \{TT_3\}$,
and note that for every finite set of graphs, $\mathcal{M}$, the chromatic number
of $\mathcal{M}$-colourable graphs is bounded. Since there are triangle free
graphs with arbitrarily large chromatic number, there is no finite set $\mathcal{M}$
such that $CSP(\mathcal{M})$ corresponds to the class of $Forb(TT_3)$-graphs.

One would like to jump to the conclusion that there is a finite set of graphs
$\mathcal{M}$ such that $Forb(F)$ is an expression of $\mathcal{M}$-colourable
graphs, if and only if $F$ is a set of oriented forests. Well, this statement
turns out to be true, but not at all obvious.  This characterization is inspired on
a result found in  \cite{kunMFCS4708}. It differs enough so that we cannot
simply cite their statement, but it is similar enough so that we can translate their
proof to this context and nomenclature. Will use the Sparse
Incomparability Lemma (for graphs) to do so. The version stated below
is due to Ne\v set\v ril and Zhu \cite{nesetrilJCTB90}.

\begin{theorem}[Sparse Incomparability Lemma]\label{thm:sparse}\cite{nesetrilJCTB90}
Let $k$ and $l$ be positive integers and let $G$ be a graph. Then, there is
a graph $G_0$ with the following properties:
\begin{itemize}
	\item there is a homomorphism $\varphi\colon G_0 \to G$,
	\item for any graph $H$ on at most $k$ vertices, $G \to H$ if and only
	if $G_0 \to H$, and
	\item $G_0$ has girth at least $l$.
\end{itemize}
\end{theorem}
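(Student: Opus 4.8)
Since this is a classical result, I will only outline how I would prove it, via a probabilistic construction. Write $V(G)=\{1,\dots,n\}$. As there are finitely many graphs on at most $k$ vertices, it suffices to build a graph $G_0$ with $G_0\to G$, with girth at least $l$, and with $G_0\not\to H$ for every member of the finite family $\mathcal{H}$ of loopless graphs on at most $k$ vertices satisfying $G\not\to H$; the converse direction $G\to H\Rightarrow G_0\to H$ is then free by composing with $G_0\to G$. (For $l\le 3$ take $G_0=G$, so assume $l\ge 4$.)

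The plan is to fix a large integer $N$, put $p=N^{-1+\varepsilon}$ with $\varepsilon=\tfrac{1}{2(l-1)}$, take pairwise disjoint vertex classes $V_1,\dots,V_n$ each of size $N$, and let $R$ be the random graph in which, for every $ij\in E(G)$, each pair in $V_i\times V_j$ is an edge independently with probability $p$ and no other pairs are edges. Collapsing each $V_i$ to $i$ is a homomorphism $R\to G$. First I would control the short cycles: for each $s<l$ the expected number of cycles of length $s$ in $R$ is $O\big((Np)^s\big)=O(N^{\varepsilon s})=O(N^{1/2})$, so by Markov's inequality, with probability tending to $1$, $R$ has at most $N^{2/3}$ cycles of length less than $l$; deleting one vertex from each of them produces $G_0$, which has girth at least $l$, retains at least $N-N^{2/3}\ge N/2$ vertices in each class, and still maps to $G$ as a subgraph of $R$.

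What remains---and the step I expect to be the crux---is to force $G_0\not\to H$ for all $H\in\mathcal{H}$. The idea is a robustness estimate for $R$: a fixed pair of sets $A\subseteq V_i$, $B\subseteq V_j$ with $|A|,|B|\ge N/(2k)$ spans no edge of $R$ with probability at most $(1-p)^{|A||B|}\le e^{-pN^2/(4k^2)}$, and since $pN^2=N^{1+\varepsilon}\gg N\log N$ this beats, in a union bound, the $2^{O(N)}$ choices of such a pair together with the edge $ij$; hence, with probability tending to $1$, $R$ is \emph{robust}, meaning every such pair spans an edge of $R$. Assuming $R$ robust and with at most $N^{2/3}$ short cycles, take any homomorphism $G_0\to H$ with $H\in\mathcal{H}$: in each $V_i$ its colour classes split the $\ge N/2$ surviving vertices into at most $k$ parts, so some part $C_i\subseteq V_i$ has $|C_i|\ge N/(2k)$, and letting $f(i)$ be the colour of $C_i$ defines a map $f\colon V(G)\to V(H)$. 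For $ij\in E(G)$, robustness gives an edge of $R$ between $C_i$ and $C_j$; as $C_i,C_j$ contain no deleted vertex, it is an edge of $G_0$, whence $f(i)f(j)\in E(H)$. Thus $f$ is a homomorphism $G\to H$, contradicting $H\in\mathcal{H}$, so $G_0$ has all the required properties. The delicate point throughout, and the reason the lemma is not obvious, is the tension in the choice of $p$: small enough that cycles of length $<l$ are rare enough to delete without shrinking the classes, yet large enough that every linear-sized pair of subsets of adjacent classes remains joined by an edge after the deletions; this pins $p$ into the window $N^{-1}\ll p\ll N^{-1+1/(l-1)}$ and keeps the estimates alive.
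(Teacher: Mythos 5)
This theorem is not proved in the paper at all: it is quoted verbatim as a known result of Ne\v{s}et\v{r}il and Zhu \cite{nesetrilJCTB90}, so there is no in-paper argument to compare against. Your probabilistic outline is essentially the standard proof of that cited lemma (blow up each vertex of $G$ into a class of size $N$, place random sparse bipartite graphs across edges of $G$ with $p=N^{-1+\varepsilon}$, delete a vertex from each of the few short cycles, and use a union-bound ``robustness'' estimate to transfer any colouring of $G_0$ by a small loopless $H$ back to a homomorphism $G\to H$ via the largest colour class in each blown-up set), and the sketch is correct: the cycle-counting, the choice of $\varepsilon=\tfrac{1}{2(l-1)}$, the $4^{N}e^{-pN^{2}/(4k^{2})}$ union bound, and the observation that edges of $R$ between surviving large colour classes survive into the induced subgraph $G_0$ are exactly the points that make it work, and the trivial direction $G\to H\Rightarrow G_0\to H$ via $G_0\to G$ is handled correctly.
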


The original version of the Sparse Incomparability Lemma asserts that for
every pair of non-bipartite graphs, $G$ and $H$, such that $G \to H$, and a
positive integer $l$, there is a graph $H_0$ such that $H_0 \to H$, $G$
and $H_0$ are incomparable, and $H_0$ has girth at least $l$. The version
stated in Theorem~\ref{thm:sparse} suits better our purpose.

\begin{theorem}\label{thm:CSP}
Let $F$ be a finite set of oriented graphs. There is a finite set
of graphs $\mathcal{M}$ such that $CSP(\mathcal{M})$ is the class
of $Forb(F)$-graphs if and only if $F$ is a set of oriented forests.
\end{theorem}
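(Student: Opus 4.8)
The plan is to prove both implications separately, using Theorem~\ref{thm:gendualities} for the easy direction and Theorem~\ref{thm:sparse} for the hard one. For the ``if'' direction, suppose $F$ is a finite set of oriented forests. We may assume $F$ is an incomparable set (discarding non-minimal members does not change $Forb(F)$). By Theorem~\ref{thm:gendualities}, there is a finite incomparable set of oriented graphs $\mathcal{M}_F$ with $(F,\mathcal{M}_F)$ a generalized duality, i.e.\ $Forb(F) = CSP(\mathcal{M}_F)$ in the digraph homomorphism order. Let $\mathcal{M}$ be the set of underlying graphs of the digraphs in $\mathcal{M}_F$. As remarked in the text preceding the theorem, a graph $G$ is a $Forb(F)$-graph iff some orientation $G'$ of $G$ lies in $Forb(F) = CSP(\mathcal{M}_F)$, iff $G' \to M$ for some $M \in \mathcal{M}_F$; composing with the identity-on-vertices orientation-forgetting map shows this is equivalent to $G \to \underline{M}$ for the underlying graph $\underline{M}$ (one direction is trivial; for the other, pull back an orientation of $\underline{M}$ along the homomorphism). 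Hence $CSP(\mathcal{M})$ is exactly the class of $Forb(F)$-graphs.

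For the ``only if'' direction, suppose $F$ contains an oriented graph $L$ that is \emph{not} homomorphically equivalent to an oriented forest; equivalently (by Theorem~\ref{thm:gendualities}, or directly) the underlying graph of the relevant structure forces a cycle, so there is some oriented graph in $Forb(F)$ of arbitrarily large girth that is not $F$-free-colourable\ldots\ --- more precisely, I would argue as follows. Assume for contradiction that $CSP(\mathcal{M})$ equals the class of $Forb(F)$-graphs for some finite set of graphs $\mathcal{M}$, yet some $L \in F$ has a cycle in the ``core'' sense, i.e.\ $L$ is not homomorphically equivalent to an oriented forest. Let $k = \max\{|V(M)| : M \in \mathcal{M}\}$ and let $l$ be larger than $|V(L)|$ for every $L \in F$. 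Pick any graph $G$ that is \emph{not} $\mathcal{M}$-colourable (such $G$ exists, e.g.\ a large complete graph or, following the $TT_3$ example, a high-chromatic high-girth graph); apply the Sparse Incomparability Lemma (Theorem~\ref{thm:sparse}) to $G$ with parameters $k$ and $l$ to obtain $G_0$ with $G_0 \to G$, with $G_0 \to H \iff G \to H$ for all $H$ on at most $k$ vertices, and with girth at least $l$. Since $G$ is not $\mathcal{M}$-colourable and each $M \in \mathcal{M}$ has at most $k$ vertices, $G_0$ is not $\mathcal{M}$-colourable either, so $G_0$ is \emph{not} a $Forb(F)$-graph: every orientation of $G_0$ contains a copy of some $H \in F$ (as a homomorphic image, i.e.\ $H \to G_0'$). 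On the other hand, because $G_0$ has girth $> |V(H)|$ for every $H \in F$, any homomorphic image of a connected $H$ living inside an orientation of $G_0$ must land in a subgraph of $G_0$ of fewer than $l$ vertices, hence in a forest (acyclic subgraph), so the image of $H$ is an oriented forest; this shows $H$ is homomorphic to an oriented forest, forcing (by taking $H$ to be a core) $H$ itself to be homomorphically equivalent to an oriented forest. Choosing $G_0$ so that \emph{some} $H \in F$ that is not forest-equivalent is unavoidable then yields the contradiction --- and to guarantee this I would instead run the argument with $G$ chosen so that $G$ is not $\mathcal{M}$-colourable but $G$ \emph{does} admit an $F$-free orientation after girth-sparsification, exploiting that a sufficiently sparse (locally forest-like) graph admits $F$-free orientations whenever not every $L \in F$ is a forest.

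The main obstacle, and the step I expect to require the most care, is exactly this last maneuver: correctly setting up the sparsification so that $G_0$ simultaneously (i) fails to be $\mathcal{M}$-colourable, which by hypothesis forces $G_0$ to have no $F$-free orientation, and (ii) is sparse enough that one \emph{can} build an $F$-free orientation of $G_0$ from its local tree structure whenever $F$ contains a non-forest, producing the contradiction. This is precisely where the proof of \cite{kunMFCS4708} must be translated: the girth bound $l$ must be taken large relative to $\max\{|V(L)|:L\in F\}$ so that in a graph of girth $\ge l$ every subgraph spanned by $< l$ vertices is a forest, and then an $F$-free orientation is obtained by orienting each such local forest consistently (e.g.\ via a suitable linear/BFS order), using that a non-forest member of $F$ cannot embed homomorphically into a forest. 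I would need to check that this local orientation can be made globally consistent --- that is the delicate point --- and that the resulting orientation genuinely avoids homomorphic images of every $L\in F$, not merely induced copies. The remaining bookkeeping (incomparability reductions, passing between a digraph and its core, and the underlying-graph correspondence between $CSP$ on digraphs and on graphs) is routine given the results already stated.
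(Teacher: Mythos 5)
Your ``if'' direction is fine and agrees with the paper's observation (take underlying graphs of the dual set $\mathcal{M}_F$ from Theorem~\ref{thm:gendualities} and pull orientations back). The ``only if'' direction, however, has a genuine gap at its central step. From ``$H\to G_0'$ and $G_0$ has girth $>|V(H)|$'' you correctly get that the image of $H$ is an oriented forest, hence $H$ maps \emph{to} a forest; but you then conclude that $H$ is homomorphically \emph{equivalent} to a forest, and that inference is false: a balanced oriented cycle always maps to a directed path, yet balanced oriented cycles can be cores (glue two incomparable zigzag paths of the same height and net length at both endpoints), so ``forest-colourable'' does not give ``forest-equivalent'', even for cores. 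Moreover, your argument only shows that for each orientation of $G_0$ \emph{some} member of $F$ maps in; it never forces the particular non-forest member $L$ to appear, and in fact the literal claim you are trying to contradict is unprovable without first normalizing $F$: for $F=\{\vec{P}_2,\vec{C}_3\}$ one has $Forb(F)$-graphs $=CSP(K_1)$ although $F$ contains a non-forest, because the non-forest is redundant. The paper's proof handles exactly these two issues by (i) closing $F$ under homomorphic images, passing to cores, and taking $F$ of minimal cardinality, which yields a witness graph $G$ that is a $Forb(F\setminus\{F_0\})$-graph but not a $Forb(F)$-graph, and (ii) using minimality to choose an $(F\setminus\{F_0\})$-free orientation $G'$ of $G$ in which every homomorphism $F_0\to G'$ is \emph{injective}. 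The contradiction is then obtained not by building an $F$-free orientation of $G_0$ from scratch, but by pulling $G'$ back along the homomorphism $G_0\to G$ supplied by the Sparse Incomparability Lemma: the resulting orientation $G_0'$ admits no homomorphism from $F\setminus\{F_0\}$, so $F_0\to G_0'$, and the girth condition collapses the cycles of $F_0$, producing a non-injective homomorphism $F_0\to G_0'\to G'$, contradicting (ii).

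Your proposed repair --- choosing $G$ so that the sparsified $G_0$ ``admits an $F$-free orientation built from its local tree structure whenever $F$ contains a non-forest'' --- does not work as stated: whether $G_0$ admits an $F$-free orientation depends on \emph{all} members of $F$, including forest members (if $\vec{P}_2\in F$ no graph with an edge has an $F$-free orientation, and if a long directed path is in $F$ then high-chromatic high-girth graphs have none, by Gallai--Roy), and for a non-forest $L$ that is nonetheless forest-colourable (a balanced cycle), sparsity alone gives no mechanism to exclude homomorphic images of $L$. So the missing ideas are precisely the minimality reduction on $F$, the existence of the witness $G$ with the injectivity property for $F_0$, and the pull-back of its orientation through the sparse cover; the SIL application and the girth-collapse observation you do have are the right tools, but they only bite once that machinery is in place.
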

\begin{proof}
As observed before, one implication is trivial. We prove the interesting
implication; we assume that there  is a finite set $\mathcal{M}$ such that
$CSP(\mathcal{M})$ is the class of $Forb(F)$-graphs. Clearly, if $H$ is a
homomorphic image of an oriented graph in $F$, then $Forb(F\cup\{H\}) = Forb(F)$
and hence $Forb(F\cup\{H\})$-graphs $ = CSP(\mathcal{M})$. So we close
$F$ under homomorphic images and then keep only the cores (we denote it by
$F$ again). Among all such sets we choose $F$ to be of minimal cardinality.

If every minimal element of $F$ is an oriented forest there is nothing to prove.
So suppose there is a minimal element $F_0 \in F$ that is not a forest.  Note
that by the minimality of $F$, there is a $Forb(F\setminus\{F_0\})$-graph $G$
that is not a $Forb(F)$-graph. So there is a $Forb(F\setminus\{F_0\})$-graph,
$G$, such that $G\not \to M$ for any $M \in \mathcal{M}$, and $G$ admits
an orientation, $G'$, such that $F_0 \to G'$.  Moreover, we claim that we can
choose an orientation
$G'$ of $G$ such that any homomorphism $\varphi\colon F_0\to G'$ is injective.
To prove it, denote by $h(F_0)$ the set of all non-injective homomorphic images
of $F_0$. By the choice of $F$, for every oriented graph $D \in h(F_0)$ its core,
$D_c$, belongs to $F$. The fact that $D_c$ is the core of a non-injective
homomorphic image of $F_0$, implies that $|V(D_c)| < |V(F_0)|$. In particular,
for every $D \in h(F_0)$ there is an oriented graph, $D'\in F\setminus\{F_0\}$,
such that
$D' \to D$. So let $H$ be a graph such that for any orientation, $H'$, of $H$,
there is a non-injective homomorphism $\varphi\colon F_0\to H'$. By considering the
homomorphic image $\varphi[F_0]$ we conclude that there is an oriented graph
$D \in F\setminus\{F_0\}$ such that $D \to \varphi[F_0] \to H'$. Since $F$ is
minimal, our claim follows. So let $G$ be a $Forb(F\setminus\{F_0\})$-graph
and $G'$ an orientation of $G$ such that any homomorphism $\varphi\colon
F_0\to G'$ is injective.

By the Sparse Incomparability Lemma (Theorem~\ref{thm:sparse}) for $G$,
for $l > |V(F_0)|$, and for $k = \max\{|V(M)|\colon M\in \mathcal{M}\}$, there
is a graph $G_0$ with girth at least $l$ such that $G_0 \to G$, and for any graph
$M$ on at most $k$ vertices, $G_0 \to M$ if and only if $G \to M$. By the choice
of $k$, and since  $G \not \to M$ for any $M \in \mathcal{M}$, then $G_0$ is not
$\mathcal{M}$-colourable. By hypothesis, for any orientation, $G_0'$, of $G_0$,
there is an oriented graph  $L \in F$ such that $L \to G_0'$. Since $G_0 \to G$
choose $G_0'$ to be the orientation induced in $G_0$ by any homomorphism
$G_0 \to G$ and the previously chosen orientation of $G$, $G'$. Cleary $G_0'
\to G'$, and recall that $G$ is a $Forb(F\setminus \{F_0\})$-graph, thus there is
no homomorphism from an oriented graph in $F\setminus \{F_0\}$ to $G_0'$.
Which in turn implies that there is a homomorphism $\varphi\colon F_0 \to G_0'$,
and the fact that the girth of $G_0$ is greater than $|V(F_0)|$ implies that
every cycle in $F_0$ must be mapped to an oriented path in $G_0'$. Hence
there is non-injective homomorphism $F_0 \to G_0'\to G'$, contradicting the
choice of $G'$. Concluding that if $F_0$ is a minimal element in $F$ it must
be an oriented forest (or homomorphically equivalent to one).
\end{proof}

Theorem~\ref{thm:CSP} together with Theorem~\ref{thm:gendualities},
have two immediate consequences.

\begin{corollary}\label{cor:homclasses}
Let $H$ be any graph. The class of $H$-colourable graphs is expressible
by $Forb$-graphs, if and only if there is a set of oriented graphs
$\mathcal{M}_H$ whose underlying graphs are homomorphically equivalent
to $H$, such that $(F,\mathcal{M}_H)$ is a generalized duality for some
set of oriented forest $F$.
\end{corollary}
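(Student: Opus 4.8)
The plan is to derive Corollary~\ref{cor:homclasses} directly by combining Theorem~\ref{thm:CSP} with Theorem~\ref{thm:gendualities}, after first translating the statement ``$\mathcal{P}$ is expressible by $Forb$-graphs'' into the language of $CSP$-classes. First I would unwind the definitions: saying the class of $H$-colourable graphs is expressible by $Forb$-graphs means exactly that there is a finite set of oriented graphs $F$ such that $Forb(F)$-graphs equals $CSP(H)$. So I would start from such an $F$, taking $\mathcal{M} = \{H\}$ in Theorem~\ref{thm:CSP}, and conclude that $F$ is (up to homomorphic equivalence) a finite set of oriented forests. Replacing each member of $F$ by its core, we may assume $F$ is an incomparable finite set of oriented forests, so Theorem~\ref{thm:gendualities} applies and furnishes a finite incomparable set of oriented graphs $\mathcal{M}_F$ with $(F,\mathcal{M}_F)$ a generalized duality, i.e.\ $Forb(F) = CSP(\mathcal{M}_F)$.

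The remaining point is to identify $\mathcal{M}_F$ with $H$ at the level of underlying graphs. From $Forb(F) = CSP(\mathcal{M}_F)$ and the passage to underlying graphs, the class of $Forb(F)$-graphs coincides with the class of graphs that are $\mathcal{M}$-colourable where $\mathcal{M}$ is the set of underlying graphs of $\mathcal{M}_F$ (this is precisely the observation made in the paragraph preceding Theorem~\ref{thm:CSP}: if $(F,\mathcal{M}_F)$ is a generalized duality then $Forb(F)$ is an expression of the class of $\mathcal{M}$-colourable graphs). Since by assumption the class of $Forb(F)$-graphs is the class of $H$-colourable graphs, we get that $CSP(\mathcal{M}) = CSP(H)$, i.e.\ a graph maps to some member of $\mathcal{M}$ if and only if it maps to $H$. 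Plugging in $H$ itself shows every $M\in\mathcal{M}$ satisfies $M \to H$ or is dominated by something that does; more carefully, $CSP(\mathcal{M}) = CSP(H)$ together with $\mathcal{M}_F$ being incomparable forces each underlying graph in $\mathcal{M}$ to be homomorphically equivalent to $H$ (each such $M$ lies in $CSP(\mathcal{M}) = CSP(H)$ so $M\to H$, while $H \in CSP(H) = CSP(\mathcal{M})$ gives $H \to M'$ for some $M'$ in $\mathcal{M}$, and chasing incomparability pins everything to the core of $H$). Setting $\mathcal{M}_H := \mathcal{M}_F$ gives exactly the desired conclusion.

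For the converse implication I would simply observe that if $(F,\mathcal{M}_H)$ is a generalized duality for a set of oriented forests $F$ and the underlying graphs of $\mathcal{M}_H$ are homomorphically equivalent to $H$, then $Forb(F) = CSP(\mathcal{M}_H)$, hence passing to underlying graphs the class of $Forb(F)$-graphs is the class of graphs colourable by the underlying graphs of $\mathcal{M}_H$, which is the class of $H$-colourable graphs since homomorphic equivalence to $H$ preserves the $CSP$-class. Thus the class of $H$-colourable graphs is expressible by $Forb$-graphs, witnessed by $F$.

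The step I expect to require the most care is not any single deep argument --- both Theorems~\ref{thm:CSP} and~\ref{thm:gendualities} do the heavy lifting --- but rather the bookkeeping around cores and homomorphic equivalence: making sure that ``$F$ a set of oriented forests'' in Theorem~\ref{thm:CSP} really lets us assume $F$ is an incomparable set of genuine oriented forests (so that Theorem~\ref{thm:gendualities} applies verbatim), and that the set $\mathcal{M}_F$ produced by the generalized duality has underlying graphs that are each homomorphically equivalent to $H$ rather than merely jointly covering $CSP(H)$. This is where one must invoke the incomparability of $\mathcal{M}_F$ and the two-sided containment $CSP(\mathcal{M}) = CSP(H)$ to collapse $\mathcal{M}$ down to (copies homomorphically equivalent to) $H$.
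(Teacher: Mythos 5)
Your overall route is the same as the paper's: the corollary is meant to follow by combining Theorem~\ref{thm:CSP} with Theorem~\ref{thm:gendualities}, and your backward implication, as well as the reduction of $F$ to an incomparable finite set of oriented forests (discarding non-minimal elements and passing to cores, which leaves $Forb(F)$ unchanged), are fine. The genuine problem is exactly at the step you yourself flag: the claim that incomparability of $\mathcal{M}_F$ together with $CSP(\mathcal{M})=CSP(H)$ forces \emph{each} underlying graph in $\mathcal{M}$ to be homomorphically equivalent to $H$. The incomparability delivered by Theorem~\ref{thm:gendualities} lives in the \emph{digraph} homomorphism order, and two oriented graphs can be incomparable there while their underlying graphs are comparable (for instance $TT_3$ and $\overrightarrow{C}_3$ are incomparable oriented graphs with the same underlying graph $K_3$). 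Hence your argument only yields: every $M\in\mathcal{M}$ satisfies $M\to H$, and $H\to M'$ for at least one $M'\in\mathcal{M}$; that is, $\mathcal{M}$ is \emph{jointly} equivalent to $H$ in the sense $CSP(\mathcal{M})=CSP(H)$, which is weaker than the element-wise conclusion stated in the corollary. Nothing you invoke rules out a ``redundant'' member $M_F^i$ of the dual whose underlying graph maps properly into $H$ while $M_F^i$ remains digraph-incomparable with the other duals; e.g.\ if the witnessing $F$ contains a forest $T_1+T_2$ with $T_1,T_2$ incomparable trees, the canonical dual contains both $D_{T_1}$ and $D_{T_2}$, and digraph-incomparability gives no reason why the underlying graph of $D_{T_2}$ could not map properly into that of $D_{T_1}\simeq H$.

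To close this you must do more than ``chase incomparability'': either prove that such graph-redundant members cannot occur in the dual of a forest antichain under the hypothesis $CSP(\mathcal{M})=CSP(H)$, or modify the witness itself --- e.g.\ remove the superfluous pieces (such as tree components of the forests in $F$ whose duals are redundant at the level of underlying graphs), verify that the modified family still expresses the class of $H$-colourable graphs, and only then apply Theorem~\ref{thm:gendualities} so that the resulting set $\mathcal{M}_H$ consists exclusively of oriented graphs whose underlying graphs are homomorphically equivalent to $H$. As written, the parenthetical ``chasing incomparability pins everything to the core of $H$'' asks the digraph-order antichain property to do work it cannot do, and this is precisely the only nontrivial content of the forward implication beyond the two cited theorems.
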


\begin{corollary}\label{cor:tree}
Consider a graph $H$. There is an oriented graph, $T$, such that $Forb(T)$ is
an expression of $H$-colourable graphs, if and only if $T$ is an oriented tree
and $H$ is homomorphically equivalent to the underlying graph of the dual
$D_T$.
\end{corollary}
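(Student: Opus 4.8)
The plan is to deduce Corollary~\ref{cor:tree} as the ``singleton'' case of Corollary~\ref{cor:homclasses}, using the characterization of duality pairs from Theorem~\ref{thm:dualitypairs} to bridge the gap between generalized dualities with a one-element left side and honest duality pairs. One direction is the sufficient condition already recorded in the text just before Theorem~\ref{thm:CSP}: if $T$ is an oriented tree and $H$ is homomorphically equivalent to the underlying graph of its dual $D_T$, then $(T,D_T)$ is a duality pair, $D_T$ is an oriented graph (it has no symmetric arc, since $T$ maps to a symmetric arc), and passing to underlying graphs shows $Forb(T)$ is an expression of $H$-colourable graphs. So the work is all in the forward direction.

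For the forward direction, suppose $T$ is an oriented graph such that $Forb(T)$ is an expression of the class of $H$-colourable graphs. First I would invoke Corollary~\ref{cor:homclasses} (equivalently, the proof of Theorem~\ref{thm:CSP}) with $F=\{T\}$: since $\{T\}$ must be, up to homomorphic equivalence and closing under homomorphic images, a set of oriented forests, and since $T$ is connected-or-not but $Forb(T)$ only depends on the core of $T$, we may replace $T$ by its core and conclude $T$ is homomorphically equivalent to an oriented forest; replacing $T$ by that core we may assume $T$ is an oriented forest. I would then argue $T$ must in fact be a \emph{tree}, i.e. connected: if $T=T_1+T_2$ were disconnected then $Forb(T)=CSP(B)$ would require $B$ (= a connected graph, since $H$-colourable graphs form a class closed under disjoint union only when $H$ itself is as it is) to satisfy $T_1+T_2\to B$ iff $T_i\to B$ for some $i$ — but $Forb(T_1+T_2)$ contains $Forb(T_1)\cap Forb(T_2)$-type obstructions; more cleanly, $Forb(T_1+T_2)$ is the class of digraphs avoiding a homomorphic copy of $T_1+T_2$, which (for a duality with a single right object) forces $T$ connected by Theorem~\ref{thm:dualitypairs}, since the left member of a duality pair is homomorphically equivalent to a single oriented \emph{tree}. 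Thus $T$ is an oriented tree, and by Theorem~\ref{thm:dualitypairs} there is a dual $D_T$ with $(T,D_T)$ a duality pair, so $Forb(T)=CSP(D_T)$ in the digraph order; taking underlying graphs, $Forb(T)$-graphs $=\,$ (underlying graph of $D_T$)-colourable graphs $=\,$ $H$-colourable graphs, which forces $H$ homomorphically equivalent to the underlying graph of $D_T$.

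The main obstacle I anticipate is the step forcing $T$ to be connected rather than merely a forest — Corollary~\ref{cor:homclasses} only gives a generalized duality $(F,\mathcal{M}_H)$ with $F$ a set of oriented forests and $\mathcal{M}_H$ possibly of size larger than one, whereas $Forb(T)$ with a single $T$ should correspond to a duality \emph{pair} (right side a single object). One must show that a generalized duality whose left side is a single oriented forest, and whose right side has underlying graphs all homomorphically equivalent to a fixed $H$, can be replaced by a genuine duality pair $(T',D_{T'})$ with $T'$ a tree; the clean way is to note that $Forb(F)=Forb(T)$ with $F$ the homomorphic-image closure means $T$ is itself (up to homomorphic equivalence) the unique minimal element, and minimal elements of such an $F$ are connected because a disconnected oriented forest $T_1+T_2$ has $T_1,T_2$ strictly below it in the homomorphism order with $T_i\to T_1+T_2$ but not conversely — so $T_1$ would also lie in the closed set $F$, and then $Forb(F)\subseteq Forb(T_1)\subsetneq Forb(T_1+T_2)=Forb(T)$, a contradiction unless $T$ is connected. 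Once connectivity is secured the rest is a direct application of Theorem~\ref{thm:dualitypairs} and the underlying-graph functor, and this last part I expect to be routine.
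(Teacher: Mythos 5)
Your backward direction is fine and is exactly the observation the paper records before Theorem~\ref{thm:CSP}. The genuine problem sits precisely at the step you yourself flag as the main obstacle: forcing $T$ to be connected. Your ``clean way'' breaks at the assertion that a component $T_1$ of $T=T_1+T_2$ lies in the homomorphic-image closure $F$ of $\{T\}$. A homomorphic image of $T_1+T_2$ is the image of a map defined on \emph{all} of $T$, so it contains images of both components; $T_1$ alone is a homomorphic image of $T$ only when $T_2\to T_1$, which is excluded in exactly the case you must rule out (incomparable components). Hence the chain $Forb(F)\subseteq Forb(T_1)\subsetneq Forb(T_1+T_2)=Forb(T)$ never gets started, and no contradiction is reached. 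The earlier variant of the argument is also unsound: to invoke Theorem~\ref{thm:dualitypairs} you need a duality pair in the \emph{digraph} homomorphism order, i.e.\ $Forb(T)=CSP(B)$ as classes of digraphs for a single digraph $B$, whereas the hypothesis only gives equality of the derived classes of underlying graphs; what Theorems~\ref{thm:CSP} and~\ref{thm:gendualities} supply is a generalized duality $(\{T\},\mathcal{M})$ whose right-hand side may contain several digraphs, so no single right object is available to you.

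Here is what does follow cheaply, and where the residual gap lies. Writing the core of $T$ as $T_1+\dots+T_s$ with the $T_i$ pairwise incomparable oriented trees (Theorem~\ref{thm:CSP} gives a forest), an orientation admits no homomorphism from $T$ if and only if it admits none from some $T_i$, so the class of $Forb(T)$-graphs is the union over $i$ of the classes of $Forb(T_i)$-graphs, and each of these is the class of graphs colourable by the underlying graph of $D_{T_i}$ (Theorem~\ref{thm:dualitypairs}, plus the remark that duals of trees have no symmetric arcs). If this union equals the class of $H$-colourable graphs, then $H$ itself lies in the union, so $H$ maps to the underlying graph of some $D_{T_i}$, while every underlying graph of a $D_{T_j}$ is $H$-colourable; hence $H$ is homomorphically equivalent to the underlying graph of $D_{T_i}$ and the single \emph{tree} $T_i$ already yields the expression. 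That is the content one can extract immediately from the cited theorems. Your stronger claim --- that the original $T$ is itself (homomorphically equivalent to) a tree --- would additionally require showing the union cannot collapse onto one of its terms when $s\ge 2$, i.e.\ that incomparable trees cannot have duals with comparable underlying graphs; neither your argument nor the theorems you cite establish this, so as written the forward direction is not proved.
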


In \cite{guzmanAR2} we showed that for any odd cycle, $C$, there
is an oriented path, $P_C$, such that a graph is $C$-colorable if
and only if it is a $Forb(P_C)$-graph. We did so by finding an
orientation, $C'$, of $C$ such that $C'$ is the dual of an oriented
path ($P_C$).   Corollary~\ref{cor:tree} shows that the problem of
characterizing the class of $H$-colorable graphs as $Forb(T)$-graphs,
is \textit{almost} equivalent to the previously mentioned technique,
i.e., it is equivalent to finding a graph, $R$, homomorphically
equivalent to $H$, and then find an orientation, $R'$, of $R$ such
that $R'$ is the dual of some oriented tree.

%%%%%%%%%%%%%%%%%%%%%%%%%%%
%%%%%%%%%%%%%%%%%%%%%%%%%%%
%%%%%% CONCLUSIONS %%%%%%%%%%%%
%%%%%%%%%%%%%%%%%%%%%%%%%%%
%%%%%%%%%%%%%%%%%%%%%%%%%%%

\section{Conclusions}
\label{sec:Conc}

As we have already mentioned in Section~\ref{sec:Fgraphs},
we believe that the necessary conditions exposed in
Theorems~\ref{thm:main*} and~\ref{thm:main} are
quite close to be sufficient as well. So we believe that
aiming to improve any of these theorems into a characterization is
a feasible problem to pursue. Also, since  a
hereditary property might be expressible by forbidden
orientations but not by $Forb$-graphs, we would like to
know if there is a homomorphism class expressible by
forbidden orientations, but not by $Forb$-graphs.
In other words, does Corollary~\ref{cor:homclasses} holds if we
replace ``expressible by $Forb$-graphs'' by ``expressible by
forbidden orientations''?

Similar to expressions by forbidden orientations, some authors
have studied \textit{expressions by forbidden ordered graphs}
\cite{damaschkeTCGT1990, feuilloleyJDM, hellESA2014}. Amongst these,
we would like to mention that the work of Feuilloley and Habib
\cite{feuilloleyJDM} stands out for being a  recent, thorough and complete
survey regarding such expressions.  Question~\ref{q:basic} can also be
posed for forbidden  ordered graphs. As far as we are concerned, there
is no example of a hereditary property that cannot be expressed by finitely
many forbidden ordered graphs, so we would like to propose this problem.

\begin{problem}\label{p:ordered}
Find an example of a hereditary property that is not expressible
by (finitely many) forbidden ordered graphs.
\end{problem}

By means of a simple combinatorial argument, one can prove that if
$\mathcal{P}$ is expressible by forbidden acyclic orientations, then
it is expressible by forbidden ordered graphs. Due to this
observation, we believe that any property that is not expressible by
forbidden acyclic orientations is a reasonable candidate to be
a witness of the example required by Problem~\ref{p:ordered}. In
particular we ask the following question.

\begin{question}
Is there a finite set of ordered graphs, $F$, such that a graph $G$
admits an $F$-free ordering, if and only if $G$ is an even-hole-free
graph?
\end{question}

Finally as a side note to the reader familiar with symbolic dynamics, form
Theorem~\ref{thm:finiteperiods} we obtain the following result.

\begin{corollary}
Let $X$ be a topological transitive shift of finite type, let $H$ be its
set of periods and let $r = gcd(H)$. Then, $H$ is a cofinite subset of
$r\mathbb{Z}^+$. In particular, if $H$ is a set of relative primes,
then $H$ is cofinite in $\mathbb{Z}^+$.
\end{corollary}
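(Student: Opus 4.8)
The plan is to build a dictionary between shifts of finite type and the languages $\mathcal{L}_A$ of Section~\ref{sec:Languages}, and then invoke Theorem~\ref{thm:finiteperiods}. Recall that a shift of finite type over a finite alphabet $\mathcal{A}$ is, by definition, the set $X_A$ of bi-infinite sequences over $\mathcal{A}$ none of whose finite factors lies in a prescribed finite forbidden set $A\subseteq\mathcal{A}^\ast$. A point $x\in X_A$ satisfies $\sigma^k x=x$ (where $\sigma$ is the shift map) exactly when $x$ is of the form $\dots www\dots$ with $w=x_0\cdots x_{k-1}$, and such an $x$ lies in $X_A$ if and only if $w^n$ avoids every block of $A$ for every $n\ge 1$, i.e.\ if and only if $w$ is a $k$-periodic word of $\mathcal{L}_A$ in the sense of Section~\ref{sec:Languages}. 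Hence the set $H$ of periods of $X$ is exactly $per(\mathcal{L}_A)$, and in particular $H$ is closed under multiplication by positive integers, so that $\gcd(H)$ equals the greatest common divisor of its minimal elements.

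Next I would translate topological transitivity. For shift spaces, $X$ is topologically transitive if and only if for any two words $u,v$ occurring in points of $X$ there is a word $d$ with $udv$ occurring in a point of $X$. This looks like the paper's notion of a transitive language, but there is one technical point: the set $\mathcal{B}(X)$ of words occurring in points of $X$ can be a proper subset of $\mathcal{L}_A$, since an $A$-free word need not extend to a bi-infinite point. I would resolve this in the standard way: pass to a higher-block presentation so that $X$ becomes a one-step SFT, i.e.\ the set of bi-infinite walks in a finite directed graph $\Gamma$; after discarding vertices through which no bi-infinite walk passes (they carry no points), topological transitivity of $X$ is equivalent to $\Gamma$ being strongly connected. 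For strongly connected $\Gamma$ the admissible-word language $\mathcal{B}(X)$ is precisely $\mathcal{L}_{A'}$, where $A'$ is the finite set of length-two non-edges of $\Gamma$, and this language is transitive: any admissible word can be prolonged along a walk to reach the start of any other admissible word. Since higher-block recoding induces a period-preserving bijection between periodic points, $per(\mathcal{L}_{A'})=H$.

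With $\mathcal{L}_{A'}$ now exhibited as a transitive language of the form $\mathcal{L}_{A'}$ for a finite set $A'$, Theorem~\ref{thm:finiteperiods} provides a positive integer $r$ such that $H=per(\mathcal{L}_{A'})$ is a cofinite subset of $r\mathbb{Z}^+$. Since $r\mid h$ for every $h\in H$ we get $r\mid\gcd(H)$; conversely, choosing large $N$ with $rN,r(N+1)\in H$ and using $\gcd(H)\mid rN$ and $\gcd(H)\mid r(N+1)$ forces $\gcd(H)\mid r$, so $r=\gcd(H)$. The ``in particular'' clause is the special case $\gcd(H)=1$, where $r\mathbb{Z}^+=\mathbb{Z}^+$.

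The main obstacle is exactly the dictionary step of the second paragraph: ensuring that the forbidden-block language attached to $X$ is transitive in the precise sense of Section~\ref{sec:Languages} and retains the same set of periods as $X$. Once the higher-block presentation reduces everything to walks in a strongly connected graph, this is routine bookkeeping, and the arithmetic conclusion is then immediate from Theorem~\ref{thm:finiteperiods}.
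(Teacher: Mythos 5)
Your proposal is correct and takes essentially the same route as the paper: identify the set of periods of $X$ with $per(\mathcal{L}_A)$ for a suitable finite forbidden set $A$, check that topological transitivity makes this language transitive, and apply Theorem~\ref{thm:finiteperiods}. The paper's proof merely asserts this dictionary, while you additionally justify it (via higher-block recoding and pruning, so that the language of $X$ really is of the form $\mathcal{L}_{A'}$ with the same periods) and verify $r=\gcd(H)$; this is careful filling-in of the same argument, not a different one.
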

\begin{proof}
The reader familiar with shift spaces, can notice that $X$ is topological
transitive if and only if the language, $L(X)$, of $X$ is transitive. Moreover,
$X$ is of finite type if and only if there is a finite set of words $A$, such
that $L(X) =\mathcal{L}_A$. Finally, the equalities $H = per(L(X)) =
per(\mathcal{L}_A)$ hold, and $H$ is not an empty set for a shift of finite type.
Thus, Theorem~\ref{thm:finiteperiods} implies the statement of this corollary.
\end{proof}

\section{Acknowledgments}

The authors are deeply grateful to Pavol Hell for many
discussions that led to the ideas in this work.   Also,
for his feedback on a preliminary version of this work,
which helped us to greatly improve its final quality.

\end{document}